\documentclass[11pt, reqno]{amsart}
\usepackage{amsmath,amssymb,amsfonts,amsthm,graphicx,mathrsfs,mathtools}
\usepackage[margin=1.5in]{geometry}
\usepackage[dvipsnames]{xcolor}
\makeatletter
\@namedef{subjclassname@2020}{%
\textup{2020} Mathematics Subject Classification}
\makeatother

\title[Minimal surfaces in $\mathbb{R}^4$ like the Lagrangian catenoid]{Minimal surfaces in $\mathbb{R}^4$ like the Lagrangian Catenoid}

\author[J. Lee]{Jaehoon Lee}
\address[]{Jaehoon Lee, Department of Mathematical Sciences, Seoul National University, Seoul  08826, Korea}
\email{jaehoon.lee@snu.ac.kr}

\begin{document}

\newtheorem{theorem}{theorem}[section]
\newtheorem{thm}[theorem]{Theorem}
\newtheorem{lem}[theorem]{Lemma}
\newtheorem{cor}[theorem]{Corollary}
\newtheorem{prop}[theorem]{Proposition}
\newtheorem{rmk}[theorem]{Remark}
\newtheorem{Def}[theorem]{Definition}
\newtheorem{Question}[theorem]{Question}
\newtheorem{Ex}[theorem]{Example}

\renewcommand{\theequation}{\thesection.\arabic{equation}}
\newcommand{\RNum}[1]{\uppercase\expandafter{\romannumeral #1\relax}}
\newcommand{\R}{\mathbb{R}}
\newcommand{\C}{\mathbb{C}}
\newcommand{\grad}{\nabla}
\newcommand{\laplacian}{\Delta}
\renewcommand{\d}{\textup{d}}

\subjclass[2020]{53C42}
\keywords{Lagrangian catenoid, Complete minimal surface, Embedded planar ends, High codimension}

\begin{abstract}
In this paper, we discuss complete minimal immersions in $\mathbb{R}^N$($N\geq4$) with finite total curvature and embedded planar ends. First, we prove nonexistence for the following cases: (1) genus 1 with 2 embedded planar ends, (2) genus $\neq4$, hyperelliptic with 2 embedded planar ends like the Lagrangian catenoid. Then we show the existence of embedded minimal spheres in $\R^4$ with 3 embedded planar ends. Moreover, we construct genus $g$ examples in $\R^4$ with $d$ embedded planar ends such that $g\geq 1$ and $g+2\leq d\leq 2g+1$. These examples include a family of embedded minimal tori with 3 embedded planar ends.
\end{abstract}

\maketitle

\section{\textbf{Introduction}}
\setcounter{equation}{0}
The Lagrangian catenoid in $\R^4$ can be identified with
\begin{align}\label{LCat}
\left\{(z, \tfrac{1}{z})\in \C^2\ |\ z\in \C-\{0\}\right\}
\end{align}
up to rigid motions and scaling. It is one of the special Lagrangian surfaces constructed by Harvey and Lawson \cite{HL}. Later, Castro and Urbano \cite{CU} characterized it as a unique special Lagrangian surface foliated by circles. Moreover, it is asymptotic to 2 planes and has total curvature $-4\pi$.

Since the Lagrangian catenoid is given by the graph of a holomorphic function, it becomes a minimal surface in $\R^4$. It is also an important example among minimal surfaces. Indeed, it is one of the complete doubly-connected minimal immersions in $\R^N$ with total curvature $-4\pi$ classified by Hoffman and Osserman \cite{HO1}. Furthermore, Park \cite{Park} proved that a circle-foliated minimal immersion in $\R^N$ not contained in any 3-dimensional Affine subspaces is a part of the Lagrangian catenoid.

In this paper, we are interested in the asymptotic behavior of the Lagrangian catenoid. Minimal surfaces with embedded planar ends have received a lot of attention for many reasons. They not only provide interesting examples in the theory of minimal surfaces but are also related to Willmore surfaces in the round sphere. Indeed, Bryant \cite{Br1} first proved that minimal surfaces in $\R^3$ with embedded planar ends compactify to give Willmore surfaces in $\mathbb{S}^3$, and conversely, all Willmore spheres are given in this way. 

There are many results concerning existence and nonexistence in $\R^3$, and here we recall some of them. For nonexistence, Bryant \cite{Br2} showed that there are no complete minimal spheres with 3, 5, and 7 embedded planar ends. Using the spinor representation, Kusner and Schmitt \cite{Spin} proved nonexistence for complete minimal tori with 3 embedded planar ends. Also, many examples have been constructed: see \cite{Spin, Sham} and references therein.

It should also be mentioned that there are no examples in $\R^3$ with 2 embedded planar ends regardless of genus. This can be verified by applying the Kusner theorem (see \cite[Theorem~A]{K1}) and the maximum principle. Therefore examples like the Lagrangian catenoid are only possible in high codimension. Now it is natural to ask in high codimension whether there exist minimal surfaces with 2 embedded planar ends and positive genus.  

In this regard, we deal with nonexistence in Section \ref{MAIN}. We prove nonexistence for the following cases: (1) genus 1 minimal surfaces with 2 embedded planar ends (Theorem \ref{G1}), (2) genus $\neq4$, hyperelliptic minimal surfaces with 2 embedded planar ends like the Lagrangian catenoid (Theorem \ref{G2}). Note that our definition of the embedded planar end only allows the multiplicity to be one when the end approaches the asymptotic plane. This is because our main interest is on the Lagrangian catenoid, and it can be viewed as a direct generalization of the embedded planar end in $\R^3$. See Section \ref{EPE} for definitions and more details on planar ends.

Our proof uses the generalized Weierstrass representation. We take the special form of meromorphic differentials (see (\ref{GForm})) and use algebraic computations. The main difficulty in this method is the period problems. To avoid this, we also rely on intrinsic conformal symmetries. Here $\R^4$-version of the Kusner theorem (Corollary \ref{Kus4}) plays a significant role.

Finally, we construct several examples in $\R^4$ with more than 2 embedded planar ends in Section \ref{MAINN}. In the first part, the existence of embedded minimal spheres with 3 embedded planar ends is discussed. Then we construct genus $g$ complete minimal immersions in $\R^4$ with $d$ embedded planar ends for each $(g, d)$ satisfying $g\geq1$ and $g+2\leq d\leq2g+1$. These examples contain a family of embedded minimal tori with 3 embedded planar ends. Note that all examples are given as holomorphic curves in $\C^2$.

The paper is organized as follows: In Section \ref{EPE} we define the notion of embedded planar end in $\R^N$ in terms of the generalized Weierstrass representation. Moreover, $\R^4$-version of the Kusner theorem is discussed. In Section \ref{LEMM} we provide a special form of the Weierstrass data that we use in the proof. We also specify meromorphic 1-forms in several cases. Section \ref{MAIN} is devoted to the nonexistence of complete minimal immersions with 2 embedded planar ends. Finally, we construct examples with more than 2 embedded planar ends in Section \ref{MAINN}. 

Throughout the paper, we use the identification $\C^2\simeq\R^4$ via $(z, w)\in\C^2\leftrightarrow (\mbox{Re}z,\mbox{Im}z,\mbox{Re}w,\mbox{Im}w)\in\R^4$.

\section*{\textbf{Acknowledgements}}
The author would like to express his gratitude to Jaigyoung Choe for his thoughtful encouragement and for raising the question of nonexistence. This work was supported in part by NRF-2018R1A2B6004262.

\section{\textbf{Embedded planar ends}}\label{EPE}
\setcounter{equation}{0}  
\subsection{Embedded planar ends in arbitrary codimension}
Let us consider a complete immersed minimal surface of finite total curvature in $\R^N$, $N\geq 3$. Up to translations, it can be represented by an immersion $X: \Sigma_g\backslash\{p_1, p_2, \cdots, p_k\}\to\R^N$, where
\begin{align}\label{WR}
X=\mbox{Re}\int\left(\phi_1, \phi_2, \cdots, \phi_N\right)
\end{align}
with the following conditions:
\begin{itemize}
\item $\Sigma_g$ is a closed Riemann surface of genus $g$, and $\{p_1, p_2, \cdots, p_k\}\subset\Sigma_g$,
\item $\phi_i$'s are meromorphic 1-forms on $\Sigma_g$ with possible poles at $\{p_1, p_2, \cdots, p_k\}$, and at least one of them should have a pole at each $p_i$,
\item $\sum_{i=1}^N\phi_i^2\equiv0$ and $\sum_{i=1}^N|\phi_i|^2>0$,
\item $\phi_i$'s have no real periods.
\end{itemize}
It is called \emph{the generalized Weierstrass representation} and we call $(\phi_1, \phi_2, \cdots, \phi_N)$ \emph{the Weierstrass data}. Note that each $p_i$ corresponds to a limit end of the surface. Moreover, we regard the map $\Phi\coloneqq[\phi_1, \phi_2, \cdots, \phi_N]: \Sigma_g \to \mathbb{P}^{N-1}$ as \emph{the generalized Gauss map}.

Now let us define the notion of embedded planar end in $\R^N$. We say that the minimal immersion $X$ has an \emph{\textbf{embedded planar end at $p_i$}} if there exists an open neighborhood $U_i$ of $p_i$ such that the image $X(U_i\backslash\{p_i\})$ is asymptotic to a plane and it is a graph over the same plane outside some compact subset, where all height functions have no logarithmic growth. The immersion $X$ is said to have embedded planar ends provided that it has an embedded planar end at each $p_i$.

This definition coincides with the classical notion of embedded planar end in $\R^3$, when $N=3$. However, in high codimension, the embeddedness of an end as a set does not guarantee that it is a graph over the asymptotic plane. Each end can be a multi-valued graph. Since we are mainly interested in multiplicity one ends like the Lagrangian catenoid, we rule out multiplicities by imposing the graph condition. For a future study, it will be an interesting problem to classify all possible types of embedded ends in $\R^{N\geq4}$ including multiplicities or logarithmic growth.

On the other hand, the definition can be expressed in terms of the Weierstrass data as follows:
\begin{prop}\label{EPW}
Let $X: \Sigma_g\backslash\{p_1, p_2, \cdots, p_k\}\to \R^N$ be a complete minimal immersion of finite total curvature with the Weierstrass data $(\phi_1, \phi_2, \cdots, \phi_N)$ as in the above. The immersion $X$ has embedded planar ends if and only if
\begin{align}\label{EPWD}
\min_{1\leq j\leq N}\emph{ord}_{p_i}\phi_j=-2\ \text{and}\ \emph{res}_{p_i}\phi_1=\emph{res}_{p_i}\phi_2=\cdots=\emph{res}_{p_i}\phi_N=0
\end{align}
for all $i=1, 2, \cdots, k$. 
\end{prop}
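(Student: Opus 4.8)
The plan is to work locally near a single end $p_i$, using a conformal coordinate $\zeta$ centered at $p_i$ on the neighborhood $U_i$, so that $U_i \backslash \{p_i\}$ corresponds to a punctured disk $0<|\zeta|<\epsilon$. Because the total curvature is finite, each $\phi_j$ is meromorphic at $p_i$, so near $p_i$ we may write $\phi_j = f_j(\zeta)\, \d\zeta$ with $f_j$ meromorphic. Set $m = -\min_{1\le j\le N}\mathrm{ord}_{p_i}\phi_j$, i.e. the largest pole order among the $f_j$, which is at least $1$ since by hypothesis some $\phi_j$ has a genuine pole at $p_i$; completeness of the metric $\sum|\phi_j|^2$ at the puncture in fact forces $m\ge 1$. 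After an orthogonal change of coordinates in $\R^N$ (which preserves all conditions on the Weierstrass data) one can arrange that the asymptotic behavior is governed by the leading term. The key object is the expansion of $X = \mathrm{Re}\int (\phi_1,\dots,\phi_N)$ near $\zeta=0$: integrating term by term, a pole of order $\ell\ge 2$ in $f_j$ contributes a term $\sim \zeta^{-(\ell-1)}$, a pole of order exactly $1$ (i.e. a nonzero residue) contributes a $\log|\zeta|$ term, and a residue-free simple pole or holomorphic part contributes bounded terms.

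First I would prove the ``only if'' direction. Suppose $X$ has an embedded planar end at $p_i$. The graph-and-no-logarithmic-growth condition says that, in suitable Euclidean coordinates, two of the coordinate functions (the ``plane'' coordinates, say $X_1, X_2$) parametrize the asymptotic plane and blow up, while the remaining $N-2$ ``height'' functions $X_3,\dots,X_N$ tend to finite limits and have no $\log$ growth. Embeddedness with multiplicity one means the end is a single-sheeted graph, so the map to the asymptotic plane is, near $p_i$, a biholomorphism onto a neighborhood of infinity; this forces the combination $f_1 \d\zeta \pm i f_2\d\zeta$ governing the planar part to have a pole of order exactly $2$ (a pole of order $\ge 3$ would make the plane-projection a branched cover of degree $\ge 2$ near infinity, i.e. a multi-sheeted end; a pole of order $1$ would give only logarithmic blow-up, not a complete planar end, and a pole of order $0$ would give a bounded, hence compact, end). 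Thus $m=2$. The no-logarithmic-growth requirement on every coordinate function then immediately gives $\mathrm{res}_{p_i}\phi_j = 0$ for all $j$: a nonzero residue in any $\phi_j$ produces a $\mathrm{Re}(\mathrm{res}_{p_i}\phi_j \cdot \log\zeta) = (\text{const})\log|\zeta| + \dots$ term in $X_j$, contradicting the definition. One subtlety I must handle carefully is that $\mathrm{res}_{p_i}\phi_j$ need not be real, so the period condition (no real periods) alone does not kill it; but the logarithmic-growth clause in the definition of embedded planar end does, since $\mathrm{Re}(c\log\zeta)$ has unbounded modulus whenever $c\ne 0$.

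Conversely, assume (\ref{EPWD}). Then each $f_j$ has a pole of order at most $2$ and zero residue, so $\phi_j = \big(a_j \zeta^{-2} + (\text{holomorphic})\big)\d\zeta$, and $\mathrm{Re}\int\phi_j = \mathrm{Re}(-a_j\zeta^{-1}) + (\text{bounded})$. Let $(a_1,\dots,a_N)\ne 0$ be the vector of leading coefficients. From $\sum_j \phi_j^2 \equiv 0$, comparing the $\zeta^{-4}$-coefficients gives $\sum_j a_j^2 = 0$; writing $a_j = b_j + i c_j$ with $b_j, c_j\in\R$, this means $|b|=|c|$ and $b\perp c$ in $\R^N$. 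Hence $\mathrm{Re}(-a_j/\zeta)$, as $\zeta$ ranges over a small punctured disk, sweeps out (to leading order) a round punctured plane spanned by the orthogonal pair $b, c$: this is precisely an end asymptotic to the $2$-plane $\mathrm{span}(b,c)$, approached with multiplicity one since $\zeta\mapsto -1/\zeta$ is injective. The bounded remainder terms — here I use that there are no $\log$ terms because the residues vanish, and no other unbounded terms because the pole order is exactly $2$ — show that outside a compact set the end is a graph over this plane with height functions converging to constants and no logarithmic growth. Completeness at $p_i$ is automatic from the order-$(-2)$ pole. This is exactly the definition of an embedded planar end at $p_i$, and doing this for every $i$ finishes the proof.

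I expect the main obstacle to be the ``only if'' direction: rigorously extracting from the geometric hypotheses (``graph over the plane'', ``multiplicity one'', ``no logarithmic growth'') the precise analytic conclusion that the maximal pole order is $2$ — not $1$, not $\ge 3$ — and that all residues vanish. The cleanest route is to reduce to the planar projection and invoke the local structure of the map $\zeta \mapsto \mathrm{Re}(-a/\zeta) + (\text{lower order})$ near $\zeta = 0$, comparing with the classical $\R^3$ analysis of embedded ends in the Weierstrass representation and then accounting for the extra height coordinates via the no-$\log$-growth condition.
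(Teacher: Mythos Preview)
Your proposal is essentially correct and in fact supplies the detailed local analysis that the paper itself omits: the paper's own proof of this proposition is nothing more than a reference to \cite{Spin} for the $N=3$ case together with the remark that, under the paper's definition of embedded planar end, the same argument carries over verbatim to $N\ge 4$. So you are not taking a different route so much as writing out the standard argument the paper only cites.

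One point in the ``only if'' direction needs sharpening. You assert that ``the no-logarithmic-growth requirement on every coordinate function'' forces all residues to vanish, but the paper's definition imposes the no-logarithmic-growth condition only on the \emph{height} functions $X_3,\dots,X_N$, not on the planar coordinates $X_1,X_2$. Your argument therefore directly kills $\mathrm{res}_{p_i}\phi_j$ only for $j\ge 3$. To finish, use the conformality relation and the period condition together: once $\phi_3,\dots,\phi_N$ are holomorphic at $p_i$, the product $(\phi_1-i\phi_2)(\phi_1+i\phi_2)=-(\phi_3^2+\cdots+\phi_N^2)$ is holomorphic there, so if (say) $\phi_1-i\phi_2$ carries the order-$2$ pole then $\phi_1+i\phi_2$ vanishes to order $\ge 2$ and in particular has zero residue. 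Writing $r=\mathrm{res}_{p_i}(\phi_1-i\phi_2)$ then gives $\mathrm{res}_{p_i}\phi_1=\tfrac{r}{2}$ and $\mathrm{res}_{p_i}\phi_2=\tfrac{ir}{2}$; the no-real-period condition forces both of these to be real, which is possible only if $r=0$. (Incidentally, this also clarifies your parenthetical remark: the period condition forces each residue to be \emph{real}, and it is the combination with conformality, not the log-growth clause, that then kills the planar residues.) With this correction the argument is complete.
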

\begin{proof}
For $N=3$, we may refer the reader to \cite{Spin}. By our definition of the embedded planar end in $\R^N$, almost the same proof also applies to $N\geq4$.
\end{proof}

\begin{rmk}\label{RVD}
\textup{The image $X(\Sigma_g\backslash\{p_1, p_2, \cdots, p_k\})\subset\R^N$ is contained in some Affine subspace of dimension $\min\{2k, N\}$ provided that it has embedded planar ends. Indeed, following the notation in \cite{Br1}, consider $V_D=\{\omega\in H^0(\Sigma_g, C\otimes[2D])\ |\  \text{res}_p\omega=0\ \forall p\in D,\ \text{Re}(\text{Period}_{\gamma}\omega)=0\ \forall\gamma\in H_1(\Sigma_g, \mathbb{Z})\}$, where $D$ is a divisor in $\Sigma_g$ and $C$ is the canonical bundle. It is known as a real vector space of dimension $2\cdot\text{deg}D$. Now let $D=p_1+p_2+\cdots+p_k$. Then Proposition \ref{EPW} implies that meromorphic 1-forms in the Weierstrass data are the elements of $V_D$, which is of real dimension $2k$. Therefore the observation easily follows from (\ref{WR}).}
\end{rmk}

\begin{Ex}[the Lagrangian Catenoid in $\R^4$]\label{EX1}
\textup{Using the identification $\C^2\simeq\R^4$, we may write $X(z)=(z, \frac{1}{z})\in\C^2$ (see (\ref{LCat})) as 
\begin{align*}
X(z)=\left(\mbox{Re}z, \mbox{Im}z, \mbox{Re}\frac{1}{z}, \mbox{Im}\frac{1}{z}\right)\in\R^4,\ z\in\C-\{0\}, 
\end{align*}
and the Weierstrass data can be computed as follows: 
\begin{align*}
(\phi_1, \phi_2, \phi_3, \phi_4)=2X_zdz=(d z, -id z, -\frac{1}{z^2}d z, \frac{i}{z^2}d z), 
\end{align*}
where $i=\sqrt{-1}$. It is straightforward to see that meromorphic 1-forms in the data satisfy (\ref{EPWD}) in Proposition \ref{EPW}. This implies that the Lagrangian catenoid has 2 embedded planar ends in our sense. Moreover, the generalized Gauss map is given by $\Phi(z)=[1, -i, -\frac{1}{z^2}, \frac{i}{z^2}]\in\mathbb{P}^3$. Therefore the Gauss images of the two ends $z=0$ and $z=\infty$ are $[0, 0, 1, -i]\in\mathbb{P}^3$ and $[1, -i, 0, 0]\in\mathbb{P}^3$, respectively.}
\end{Ex}
As a direct consequence of the definition, we have the following characterization in genus 0:
\begin{prop}\label{DC}
A complete doubly-connected minimal immersion of finite total curvature in $\R^N$($N\geq4$) with embedded planar ends is contained in $\R^4(\simeq\C^2)\subseteq\R^N$, up to rigid motions. Moreover, it is given by a graph of $f(z)=az+\frac{b}{z}$ over $z\in\C-\{0\}$ for some $a\in\C$ and $b\in\C-\{0\}$.
\end{prop}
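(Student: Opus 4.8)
The plan is to start from the generalized Weierstrass representation of a complete doubly-connected minimal immersion of finite total curvature, which forces the underlying Riemann surface to be $\Sigma_0 = \mathbb{P}^1$ with exactly two punctures, say $p_1 = 0$ and $p_2 = \infty$ in a suitable coordinate $z$. By Proposition \ref{EPW}, the hypothesis of embedded planar ends translates into the condition that each $\phi_j$ is a meromorphic $1$-form on $\mathbb{P}^1$ with poles of order at most $2$ at $0$ and $\infty$ and vanishing residues there, and that $\min_j \mathrm{ord}_{p_i}\phi_j = -2$. A meromorphic $1$-form on $\mathbb{P}^1$ with at worst double poles at $0$ and $\infty$ and no residues is exactly of the form $(a_j z + b_j + c_j z^{-1})\,\d z$ with $b_j = 0$ (the $b_j\,\d z/z$ term is precisely the residue term), i.e. $\phi_j = (a_j z + c_j z^{-1})\,\d z$ for constants $a_j, c_j \in \mathbb{C}$.

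Next I would invoke the defining algebraic constraints $\sum_j \phi_j^2 \equiv 0$ and $\sum_j |\phi_j|^2 > 0$. Writing $\phi_j = (a_j z + c_j z^{-1})\,\d z$ and expanding $\sum_j \phi_j^2 = \big(\sum_j a_j^2\, z^2 + 2\sum_j a_j c_j + \sum_j c_j^2\, z^{-2}\big)\,\d z^2$, the vanishing identity forces the three complex vectors $a = (a_j)$, $c = (c_j)$ in $\mathbb{C}^N$ to satisfy $a\cdot a = 0$, $c\cdot c = 0$, and $a\cdot c = 0$. The completeness/immersion condition $\sum_j|\phi_j|^2>0$ prevents $a$ and $c$ from both vanishing at any point, so both $a$ and $c$ are nonzero null vectors; moreover the "$\min = -2$" condition at each end guarantees $c\neq 0$ (end at $0$) and $a\neq 0$ (end at $\infty$). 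Now I would use the standard fact that a nonzero null vector $v\in\mathbb{C}^N$ with $v\cdot v = 0$ spans, together with $\bar v$, a real $2$-plane, and that two null vectors $a, c$ with $a\cdot c = 0$ (together with their conjugates) span a real subspace of dimension at most $4$; after an ambient rotation we may assume $a, c \in \mathbb{C}^2 \oplus \{0\} \subset \mathbb{C}^N$, so the whole immersion lands in $\mathbb{R}^4$. This already matches Remark \ref{RVD} with $k=2$.

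Finally, working in $\mathbb{R}^4 \simeq \mathbb{C}^2$ with complex coordinates $(w_1, w_2)$, I would normalize the null vectors. A null vector in $\mathbb{C}^2$ (with respect to the complexified Euclidean form, written in the real coordinates) corresponds, under the identification $\C^2\simeq\R^4$, to an eigenvector of the complex structure, so up to a unitary rotation of $\C^2$ and scaling we can take the Weierstrass data to be $\phi = 2\,\partial_z X\,\d z$ with $X = (f(z), g(z))$ a holomorphic curve; the two null directions $a$ and $c$ then correspond exactly to $g$ having at worst a simple pole at $\infty$ (linear growth $az$) and $f$ (or rather the other coordinate) a simple pole at $0$ (term $b/z$), after possibly swapping the two $\C$-factors. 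Pinning down the residue-free double-pole structure in each coordinate shows that each component of $X$ is of the form $\alpha z + \beta + \gamma z^{-1}$; absorbing the constant $\beta$ by translation and matching up which coordinate carries the $z$ term versus the $z^{-1}$ term, one coordinate is an affine function of $z$ alone and the other is $az + b/z$ — and a further affine reparametrization in $z$ together with an ambient rigid motion puts the immersion in the stated normal form $X(z) = (z, f(z))$ with $f(z) = az + b/z$, where $b\neq 0$ because the end at $0$ must genuinely be there (if $b=0$ the map extends holomorphically and the surface is a plane, not doubly connected). The one point requiring a little care — and the main thing beyond routine bookkeeping — is the linear algebra step identifying the real span of $\{a, \bar a, c, \bar c\}$ as a coordinate $\C^2$ and verifying that the null conditions become exactly the $\C$-linearity that realizes the surface as a holomorphic graph; everything else is direct computation with the explicit form of the $\phi_j$.
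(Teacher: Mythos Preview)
Your approach is quite different from the paper's. The paper disposes of the proposition in two lines: Gauss--Bonnet forces the total curvature to be $-4\pi$, and then the Hoffman--Osserman classification \cite[Proposition~6.6]{HO1} of complete doubly-connected minimal surfaces of total curvature $-4\pi$, combined with the zero-logarithmic-growth clause in the definition of embedded planar end, gives the normal form directly. You instead rebuild the relevant piece of that classification by hand from the Weierstrass data on $\mathbb{P}^1$, which is more self-contained and avoids the black-box citation, at the cost of some linear algebra on null vectors.

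There is, however, a concrete slip in your identification of the $\phi_j$. A meromorphic $1$-form on $\mathbb{P}^1$ with at worst double poles at $0$ and $\infty$ and vanishing residues there is
\[
\phi_j=\bigl(a_j+c_j z^{-2}\bigr)\,\d z,
\]
not $(a_j z+c_j z^{-1})\,\d z$: in your expression the term $c_j z^{-1}\,\d z$ \emph{is} the residue term at $0$ (so it must be absent), while $a_j z\,\d z$ has a pole of order $3$ at $\infty$. Fortunately this does not damage the strategy: with the corrected form, $\sum_j\phi_j^2\equiv0$ still yields exactly $a\cdot a=c\cdot c=a\cdot c=0$, and integration gives $X(z)=\mathrm{Re}(az-c/z)+\mathrm{const}$, so the rest of your outline survives unchanged. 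Your final normalization step also needs tightening: it is not true that an arbitrary null vector in $\C^4$ is an eigenvector of the complex structure $J$ on $\R^4\simeq\C^2$. The correct statement is that two distinct null directions with $a\cdot c=0$ lie on a common ruling of the quadric $Q\subset\mathbb{P}^3$, and since $O(4)$ acts transitively on rulings one may rotate so that this ruling is $L_{[1,0]}$, i.e.\ the $(1,0)$-eigenspace of $J$; only then does the curve become holomorphic in $\C^2$, after which a unitary rotation of $\C^2$ sending the $[\gamma]$-direction to the second axis and a rescaling of the parameter $z$ produce the graph $(z,\,az+b/z)$.
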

\begin{proof}
It has total curvature $-4\pi$ by the Gauss-Bonnet theorem. The result follows from the classification by Hoffman and Osserman \cite[Proposition~6.6]{HO1} and the zero-logarithmic-growth condition in our definition.
\end{proof}
In what follows, we introduce a special subclass of the embedded planar ends for 2-ended surfaces. First, assume that the ambient space is $\R^4$. The generalized Gauss map $\Phi: \Sigma_g\to\mathbb{P}^3$ factors through the quadric $Q\coloneqq\{[\zeta_1, \zeta_2, \zeta_3, \zeta_4]\in\mathbb{P}^3\ |\ \zeta_1^2+\zeta_2^2+\zeta_3^2+\zeta_4^2=0\}$. It is a ruled surface with two families of rulings:  
\begin{align}\label{ruling}
\begin{cases*}
L_{[a, b]}\coloneqq\{[ax+by, -i(ax-by), -bx+ay, -i(bx+ay)]\in Q\ |\ [x, y]\in\mathbb{P}^1\}\\
M_{[a, b]}\coloneqq\{[ax+by, -i(ax-by), bx-ay, -i(bx+ay)]\in Q\ |\ [x, y]\in\mathbb{P}^1\} 
\end{cases*}
\end{align}
for $[a, b]\in\mathbb{P}^1$. Now recall that for the Lagrangian catenoid, the Gauss images of the limit ends are $[1, -i, 0, 0]$, $[0, 0, 1, -i]\in\mathbb{P}^3$ (see Example \ref{EX1}). Here, the two Gauss images are on the same ruling $L_{[1, 0]}$.

Motivated from this observation, we say that a complete minimal immersion in $\R^4$ with 2 embedded planar ends has \emph{\textbf{ends like the Lagrangian catenoid}} if also the Gauss images of the limit ends are on the same ruling. Using the fact that rulings are given by the intersection of the quadric and tangent hyperplanes, we can also think of \emph{ends like the Lagrangian catenoid in $\R^N$} by Remark \ref{RVD}. Note that the Hoffman-Osserman surfaces that appeared in Proposition \ref{DC} are examples with ends like the Lagrangian catenoid.
\subsection{Kusner's theorem revisited}
In this subsection, we discuss $\R^4$-version of the Kusner theorem. We first recall the original theorem in \cite{K1}:
\begin{thm}[Theorem A in \cite{K1}]\label{KUS}
Let $M$ be a complete immersed minimal surface in $\R^3$ with finite total curvature and $\overline{M}$ be the one-point compactification of $M$ via the stereographic projection $\mathbb{S}^3\sim\R^3\cup\{\infty\}$. Denote by $n_x$ the multiplicity at a point $x\in\mathbb{S}^3$, and define $n(M)=\max\{n_x|x\in\R^3\}$ and $e(M)=n_{\infty}$. Then we have $n(M)\leq e(M)$ with the equality if and only if $M$ is a union of planes.
\end{thm}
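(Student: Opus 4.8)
The plan is to obtain everything — both the inequality and the rigidity — from the monotonicity formula for stationary $2$-varifolds, applied directly to $M$ in $\R^3$, where the mean‑curvature term vanishes identically and the formula is an exact identity. Since $M$ is complete with finite total curvature it is properly immersed (Osserman's structure theorem), hence defines a stationary integral varifold, and for each $p\in\R^3$ the area ratio $\Theta(p,r):=(\pi r^{2})^{-1}\,\mathrm{Area}\big(M\cap B_r(p)\big)$ is finite and satisfies
\[
\Theta(p,\sigma)-\Theta(p,\rho)=\int_{M\cap\left(B_\sigma(p)\setminus B_\rho(p)\right)}\frac{\big|(x-p)^{\perp}\big|^{2}}{|x-p|^{4}}\,\d A\ \ge\ 0\qquad(0<\rho<\sigma),
\]
where $(x-p)^{\perp}$ is the normal component of the position vector based at $p$.

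First I would read off the two endpoint limits. As $\rho\to0^{+}$ one gets $\Theta(p,\rho)\to\Theta(M,p)$, the $2$-density of $M$ at $p$, which is exactly the multiplicity $n_p$ (the sheets of $M$ through $p$, branch points counted with order). As $\sigma\to\infty$ one uses the asymptotic structure of finite‑total‑curvature ends: each end is, outside a compact set, an $m_j$-sheeted graph of sublinear height over a $2$-plane and so contributes $m_j$ to the area ratio, whence $\Theta(p,\sigma)\to\sum_j m_j$; and $\sum_j m_j$ is precisely $n_\infty=e(M)$, the multiplicity at the point at infinity of the compactified surface $\overline M\subset\mathbb{S}^{3}$, because the $2$-density is a pointwise conformal invariant and is therefore unchanged by stereographic projection. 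The displayed identity then reads $e(M)-n_p=\int_M|(x-p)^{\perp}|^{2}|x-p|^{-4}\,\d A\ge0$, so $n_p\le e(M)$ for every $p$, and taking the maximum over $p$ gives $n(M)\le e(M)$.

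To settle the equality case I would suppose $n(M)=e(M)$ and pick $p$ with $n_p=n(M)$. The integral on the right then vanishes, forcing $(x-p)^{\perp}\equiv0$ along $M$; equivalently the radial field based at $p$ is everywhere tangent to $M$, i.e.\ $M$ is a cone with vertex $p$. A complete, smoothly immersed minimal cone in $\R^3$ has as its link a closed geodesic of $\mathbb{S}^{2}$, i.e.\ a finite union of great circles, so $M$ is a union of planes through $p$ (a single plane if $M$ is required to be connected). The converse — a union of concurrent planes attains equality — is immediate.

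The one genuinely delicate step is the evaluation $\lim_{\sigma\to\infty}\Theta(p,\sigma)=n_\infty$: it rests on Osserman's structure theorem together with the Jorge--Meeks/Schoen asymptotics, which describe a finite‑total‑curvature end as a (possibly multiply covered) plane with sublinear height growth, and on the fact that the $2$-density is preserved under the one‑point compactification because the round metric of $\mathbb{S}^{3}$ is conformal to the Euclidean metric with a factor that is smooth and positive at the relevant point. The remaining ingredients — properness of $M$, the monotonicity identity, and the rigidity in its equality case — are standard for stationary varifolds, so no further work is needed there.
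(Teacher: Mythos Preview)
Your argument is correct. The monotonicity identity for a stationary $2$-varifold in $\R^3$, together with Osserman's structure theorem to evaluate the limit at infinity as $\sum_j m_j=n_\infty$, gives $n_p\le e(M)$ for every $p$, and the rigidity in the equality case (the cone must be a union of planes because its link is a union of great circles) is exactly right.

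This is, however, not the route the paper describes. The paper does not prove the theorem itself; it recalls Kusner's original argument, which proceeds conformally: one compactifies $M$ to a closed surface $\overline{M}\subset\mathbb{S}^3$, uses the conformal invariance of $(|\vec H|^2-K)\,dA$ together with the Li--Yau inequality for the Willmore energy to obtain $n(M)\le e(M)$, and then invokes the conformal invariance of the mean curvature sphere (the conformal tangent bundle) to settle the equality case. Your approach and Kusner's are closely related underneath---the Li--Yau inequality is itself proved by a monotonicity argument after an inversion---but you have effectively unpacked that machinery and applied it directly in $\R^3$, bypassing the Willmore functional and the conformal Gauss map entirely. What you gain is a shorter, more self-contained proof; what Kusner's formulation buys, and the reason the paper presents it that way, is that the conformal framework (in particular the mean curvature sphere) is what the paper needs to carry the equality case over to $\R^4$ in Corollary~\ref{Kus4}. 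Your monotonicity argument would in fact also extend to $\R^4$ (a smooth minimal $2$-cone there still has a great-circle link), so either approach would serve for the paper's purposes.
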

Note that $e(M)$ is the number of ends provided that all ends are embedded. For the proof of the theorem, Kusner used the conformal invariance of $(|\vec{H}|^2-K)dA$ and the Li-Yau inequality for the Willmore functional \cite{LY} to show the inequality. Here $\vec{H}$ is the mean curvature vector and $K$ is the Gauss curvature. Moreover, the conformal invariance of mean curvature spheres (or equivalently, the conformal tangent bundle) was used to establish the equality case. See \cite{K1} for more details.

Now it is easy to see that the first two ingredients, the conformal invariance of $(|\vec{H}|^2-K)dA$ and the Li-Yau inequality, also work in arbitrary $\R^N$, $N\geq3$. Furthermore, there is also the conformal invariance of mean curvature spheres in dimension four. Indeed, conformal immersions of surfaces in $\R^4$ (or $\mathbb{S}^4$) can be interpreted in terms of the quaternions $\mathbb{H}$. According to this interpretation, the set of mean curvature spheres arises as a conformal map into the set of complex structures on $\mathbb{H}^2$. For detailed explanations, we may refer to \cite{Br1,Quat}.

By the observation above with the fact that the definition only deals with the graphical end, a slightly modified proof of \cite[Theorem~A]{K1} gives:
\begin{cor}\label{Kus4}
Let $M$ be a complete immersed minimal surface in $\R^4$ with finite total curvature and embedded planar ends. Then $n(M)\leq e(M)$, and the equality holds if and only if $M$ is a union of 2-planes. (Here, we used the same notation as in Theorem \ref{KUS}, replacing $\R^3$ with $\R^4$.)
\end{cor}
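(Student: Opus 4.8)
The plan is to transplant Kusner's proof of \cite[Theorem~A]{K1} to $\R^4$, using the three structural ingredients isolated above: conformal invariance of $(|\vec{H}|^2-K)\,dA$, the Li--Yau inequality for the Willmore functional, and conformal invariance of the mean curvature spheres, the last one supplied in codimension two by the quaternionic description of conformal surfaces in $\R^4$ (see \cite{Br1,Quat}). Write $M=X(\Sigma_g\setminus\{p_1,\dots,p_k\})$ as in the generalized Weierstrass representation and let $\sigma\colon\R^4\hookrightarrow\mathbb{S}^4$ be an inverse stereographic projection taking the point at infinity to the north pole $N$. Because each $p_i$ is an embedded planar end — so by Proposition~\ref{EPW} every $\phi_j$ has at worst a double pole with vanishing residue there, and by definition the end is a graph over its asymptotic plane with no logarithmic growth — the conformal immersion $\sigma\circ X$ extends smoothly across each $p_i$ to a conformal immersion $\overline{X}\colon\Sigma_g\to\mathbb{S}^4$ carrying every $p_i$ to $N$. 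Hence $\overline{M}\coloneqq\overline{X}(\Sigma_g)$ is a closed conformally immersed surface in $\mathbb{S}^4$, and since each end compactifies to exactly one embedded disk through $N$, its multiplicity at $N$ equals the number of ends: $e(M)=k$.

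Next comes the energy bookkeeping. Since $M$ is minimal, $\int_M(|\vec{H}|^2-K)\,dA=\int_M(-K)\,dA$ is the absolute total curvature, which by Gauss--Bonnet on $M$ and the asymptotics of the embedded, multiplicity-one ends equals $4\pi(g+k-1)$. The functional $\int(|\vec{H}|^2-K+\bar{K})\,dA$, with $\bar{K}$ the ambient sectional curvature along the tangent plane, is conformally invariant, with the punctures contributing no extra term by the planar-end asymptotics; comparing $\R^4$ (where $\bar{K}=0$) with $\mathbb{S}^4$ (where $\bar{K}=1$), using Gauss--Bonnet on the closed surface $\Sigma_g$, and writing $W_{\mathbb{S}^4}(\overline{M})\coloneqq\int_{\overline{M}}(|\vec{H}_{\mathbb{S}^4}|^2+1)\,dA$ for the Willmore energy of $\overline{M}$ in $\mathbb{S}^4$, we get
\begin{align*}
W_{\mathbb{S}^4}(\overline{M})&=\int_M(-K)\,dA+2\pi\chi(\Sigma_g)\\
&=4\pi(g+k-1)+2\pi(2-2g)=4\pi\,e(M).
\end{align*}

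Now apply the Li--Yau inequality, in its conformally invariant form, to $\overline{M}$ at the point $\sigma(x_0)$, where $x_0\in\R^4$ realizes the maximal multiplicity $n(M)$ of $M$; since $\sigma$ preserves multiplicities, this gives $4\pi\,n(M)\le W_{\mathbb{S}^4}(\overline{M})=4\pi\,e(M)$, which is the asserted inequality. Suppose equality holds. Then Li--Yau is an equality at the $n(M)$-fold point $\sigma(x_0)$; following Kusner, we invert $\overline{M}$ at $\sigma(x_0)$ (a M\"obius change of the ambient $\mathbb{S}^4$), and the behaviour of the Willmore energy under inversion shows that the inverted surface has vanishing Willmore energy, hence is minimal. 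The conformal invariance of the mean curvature spheres — used here in its quaternionic $\R^4$ form — then makes the rigidity in the Li--Yau equality available and forces $\overline{M}$ to be a union of round $2$-spheres through $\sigma(x_0)$. Pulling back by $\sigma$, $M$ is a union of round $2$-spheres and $2$-planes in $\R^4$, and minimality excludes genuine round spheres, so $M$ is a union of $2$-planes. The converse is immediate.

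I expect the main obstacle to lie in the two places where the passage from $\R^3$ to $\R^4$ is not purely formal. First, one must verify that the graphical, multiplicity-one, zero-logarithmic-growth conditions in our definition of an embedded planar end are exactly what force $\overline{X}$ to be a genuine conformal immersion of the \emph{closed} surface $\Sigma_g$, with clean multiplicity $e(M)$ at $N$ and no concentration of $(|\vec{H}|^2-K)\,dA$ at the ends; in high codimension an end may be multi-valued or carry logarithmic growth, and these are precisely the phenomena the definition excludes, so the step should go through but deserves care. Second, the equality-case rigidity in Kusner's argument rests on the conformal tangent bundle, i.e.\ the mean-curvature-sphere congruence, whose invariance under the conformal group must be invoked in its quaternionic incarnation rather than in the classical $\R^3$ form; carrying this out faithfully is the technical heart of the ``only if'' direction. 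Everything else is a routine transcription of the proof of \cite[Theorem~A]{K1}.
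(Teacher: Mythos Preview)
Your proposal is correct and follows exactly the approach the paper indicates: the paper does not give a detailed proof but simply observes that the three ingredients of Kusner's argument---conformal invariance of $(|\vec{H}|^2-K)\,dA$, the Li--Yau inequality, and conformal invariance of the mean curvature spheres (available in $\R^4$ via the quaternionic formalism of \cite{Br1,Quat})---all carry over, and that the graphical, multiplicity-one definition of embedded planar end ensures the compactification behaves as needed. Your write-up in fact supplies considerably more detail than the paper does, including the explicit Willmore-energy bookkeeping $W_{\mathbb{S}^4}(\overline{M})=4\pi e(M)$ and the equality-case inversion argument; these are the ``slight modifications'' the paper alludes to without spelling out.
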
 

\section{\textbf{Lemmas on the Weierstrass data}}\label{LEMM}
\setcounter{equation}{0}
Let $X: \Sigma_g\backslash\{p_1, p_2\}\to\R^4$ be a genus $g$($\geq1$) complete minimal immersion of finite total curvature with 2 embedded planar ends, which is represented by (\ref{WR}) with the Weierstrass data $(\phi_1, \phi_2, \phi_3, \phi_4)$. We assume that the Gauss image at $p_1$ is $[1, -i, 0, 0]\in\mathbb{P}^3$ by rotating the surface.

By the Riemann-Roch formula \cite{GH}, we have
\begin{align}\label{RR}
\begin{cases}
\dim_{\C} H^0(\Sigma_g, C)=g,\\ 
\dim_{\C} H^0(\Sigma_g, C\otimes[2p_1])=\dim_{\C} H^0(\Sigma_g, C\otimes[2p_2])=g+1,\\ 
\dim_{\C} H^0(\Sigma_g, C\otimes[2p_1+2p_2])=g+3,
\end{cases}
\end{align}
where $C$ denotes the canonical bundle of $\Sigma_g$. Therefore we may take
\begin{align}\label{eta}
\begin{cases}
\eta_1\in H^0(\Sigma_g, C\otimes[2p_1])\backslash H^0(\Sigma_g, C),\\
\eta_2\in H^0(\Sigma_g, C\otimes[2p_2])\backslash H^0(\Sigma_g, C).
\end{cases}
\end{align}

Since each $\eta_i$ has a pole only at $p_i$, the residue formula implies that
\begin{align}\label{residue}
\mbox{res}_{p_i}\eta_i=0,\ \mbox{ord}_{p_i}\eta_i=-2\ \forall i=1, 2.
\end{align}
Clearly, (\ref{eta}) also implies that $\mbox{res}_{p_1}\eta_2=\mbox{res}_{p_2}\eta_1=0$. 

\begin{lem}\label{ORF}
Let $\eta_1$ and $\eta_2$ be given as in (\ref{eta}). Then we have complex numbers $\alpha(\neq0)$, $a_g$, $b_g$, $c_g$, and $d_g$, where $a_g^2+b_g^2+c_g^2+d_g^2=0$ and
\begin{align}\label{GForm}
\begin{cases}
\phi_1=\omega_1+\alpha\eta_1+a_g\eta_2\\
\phi_2=\omega_2-i\alpha\eta_1+b_g\eta_2\\
\phi_3=\omega_3+c_g\eta_2\\
\phi_4=\omega_4+d_g\eta_2
\end{cases}
\end{align}
for some holomorphic 1-forms $\omega_i$'s on $\Sigma_g$. Note that $[a_g, b_g, c_g, d_g]\in\mathbb{P}^3$ represents the Gauss image at $p_2$.
\end{lem}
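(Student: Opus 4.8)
The plan is to first identify, from the pole-order and residue conditions alone, the finite-dimensional space in which each component $\phi_j$ must live, and then to read off the constants from the generalized Gauss map and the null relation $\sum_j\phi_j^2\equiv0$. By Proposition \ref{EPW} each $\phi_j$ is a holomorphic section of $C\otimes[2p_1+2p_2]$ with $\mathrm{res}_{p_1}\phi_j=\mathrm{res}_{p_2}\phi_j=0$. Since a meromorphic $1$-form whose poles lie in $\{p_1,p_2\}$ automatically satisfies $\mathrm{res}_{p_1}+\mathrm{res}_{p_2}=0$, vanishing residues amount to a single linear condition on $H^0(\Sigma_g,C\otimes[2p_1+2p_2])$, and that condition is nontrivial because a differential of the third kind with simple poles at $p_1,p_2$ has nonzero residue there. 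Hence the residue-free subspace has dimension $g+2$ by (\ref{RR}). On the other hand it contains $H^0(\Sigma_g,C)$, $\eta_1$ and $\eta_2$, and $\eta_1,\eta_2$ are independent modulo $H^0(\Sigma_g,C)$ because $\eta_1$ is singular only at $p_1$ and $\eta_2$ only at $p_2$; counting dimensions, the residue-free subspace equals $H^0(\Sigma_g,C)\oplus\C\eta_1\oplus\C\eta_2$. Therefore $\phi_j=\omega_j+\lambda_j\eta_1+\mu_j\eta_2$ for holomorphic $\omega_j$ and constants $\lambda_j,\mu_j$.

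Next I would pin down $\lambda_j$ and $\mu_j$ using the Gauss map $\Phi=[\phi_1:\phi_2:\phi_3:\phi_4]$. In a local coordinate $z$ centred at $p_1$, only the $\eta_1$-term contributes a double pole (the $\omega_j$ and $\eta_2$ are holomorphic there), so the $z^{-2}$-coefficient of $\phi_j$ is a fixed nonzero multiple of $\lambda_j$; letting $z\to0$ shows the Gauss image at $p_1$ equals $[\lambda_1:\lambda_2:\lambda_3:\lambda_4]$, which by our normalization is $[1:-i:0:0]$. Thus $\lambda_3=\lambda_4=0$ and $\lambda_2=-i\lambda_1$; set $\alpha:=\lambda_1$. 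If $\alpha=0$ then no $\phi_j$ would have a pole at $p_1$, contradicting the defining property of the Weierstrass data, so $\alpha\neq0$. Running the identical computation at $p_2$, where now only $\eta_2$ produces the double pole, shows $[\mu_1:\mu_2:\mu_3:\mu_4]$ is the Gauss image at $p_2$ and is not the zero vector; writing $(a_g,b_g,c_g,d_g):=(\mu_1,\mu_2,\mu_3,\mu_4)$ yields exactly the form (\ref{GForm}) together with the stated description of the Gauss image at $p_2$.

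Finally, the relation $a_g^2+b_g^2+c_g^2+d_g^2=0$ comes from substituting (\ref{GForm}) into $\sum_j\phi_j^2\equiv0$ and extracting the most singular term at $p_2$: in the local coordinate the $z^{-4}$-coefficient of $\sum_j\phi_j^2$ is a nonzero constant times $\sum_j\mu_j^2$, which must therefore vanish. (The corresponding computation at $p_1$ is automatic, since $[1:-i:0:0]$ already lies on the quadric $Q$, consistent with the normalization.)

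The argument is essentially bookkeeping, and the one step that genuinely needs care is the dimension count at the start: verifying that imposing a vanishing residue drops the dimension by exactly one, and that $\eta_1$, $\eta_2$ together with the holomorphic forms then span the whole residue-free subspace. Everything after that is a routine local Laurent expansion.
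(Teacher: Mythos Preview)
Your proof is correct and follows essentially the same approach as the paper: both arguments show that the residue-free subspace of $H^0(\Sigma_g,C\otimes[2p_1+2p_2])$ is exactly $H^0(\Sigma_g,C)\oplus\C\eta_1\oplus\C\eta_2$, and then read off the coefficients from the Gauss images at $p_1$ and $p_2$. The only cosmetic difference is that the paper picks an explicit complementary element $\eta_3$ and checks $\mathrm{res}_{p_1}\eta_3\neq0$ to kill its coefficient, whereas you reach the same conclusion by a direct dimension count using a differential of the third kind; your derivation of $a_g^2+b_g^2+c_g^2+d_g^2=0$ via the $z^{-4}$-term at $p_2$ is also a slightly more explicit version of what the paper leaves implicit.
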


\begin{proof}
We see from (\ref{RR}) that there exists $\eta_3\in H^0(\Sigma_g, C\otimes[2p_1+2p_2])$ such that
\begin{align}\label{linear}
H^0(\Sigma_g, C\otimes[2p_1+2p_2])=H^0(\Sigma_g, C)\oplus\C\eta_1\oplus\C\eta_2\oplus\C\eta_3.
\end{align}
By the residue formula, we have 
\begin{align*}
\mbox{res}_{p_1}\eta_3+\mbox{res}_{p_2}\eta_3=0
\end{align*}
and this implies that 
\begin{align}\label{ress}
\mbox{res}_{p_1}\eta_3\neq0. 
\end{align}
Indeed, if $\mbox{res}_{p_1}\eta_3=0$, then $\mbox{res}_{p_2}\eta_3=0$ and one can conclude that $\eta_3\in H^0(\Sigma_g, C)\oplus\C\eta_1\oplus\C\eta_2$.

Now consider $\phi\in H^0(\Sigma_g, C\otimes[2p_1+2p_2])$ satisfying $\mbox{res}_{p_i}\phi=0$ $\forall i=1, 2$. By (\ref{linear}), we may write
\begin{align*}
\phi=\omega+r\eta_1+s\eta_2+t\eta_3,
\end{align*}
where $\omega\in H^0(\Sigma_g, C)$ and $r$, $s$, and $t$ are complex numbers. If we compute the residue at $p_1$, then (\ref{residue}) gives
\begin{align*}
0=\mbox{res}_{p_1}\phi=r\cdot\mbox{res}_{p_1}\eta_1+t\cdot\mbox{res}_{p_1}\eta_3=t\cdot\mbox{res}_{p_1}\eta_3
\end{align*}
and thus we obtain $t=0$ by (\ref{ress}). Therefore it follows from Proposition \ref{EPW} that $\phi_i$'s are contained in $H^0(\Sigma_g, C)\oplus\C\eta_1\oplus\C\eta_2$. 

Express each $\phi_j$ in a linear combination of $\eta_1$, $\eta_2$, and holomorphic 1-forms. As the Gauss image at each $p_i$ is obtained from the coefficients of $\eta_i$ in linear combinations, we obtain the expression in (\ref{GForm}).
\end{proof}
We need the following lemma for later use:
\begin{lem}\label{contra}
Suppose that the Weierstrass data is given in the form of (\ref{GForm}). Then $\phi_3^2+\phi_4^2\not\equiv0$.
\end{lem}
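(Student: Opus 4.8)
The plan is to argue by contradiction: suppose $\phi_3^2+\phi_4^2\equiv 0$, and derive a contradiction with the hypotheses, most likely with completeness of the immersion or with the structure forced by the two embedded planar ends. First I would unpack what $\phi_3^2+\phi_4^2\equiv 0$ means pointwise. From (\ref{GForm}) we have $\phi_3=\omega_3+c_g\eta_2$ and $\phi_4=\omega_4+d_g\eta_2$, so $\phi_3^2+\phi_4^2\equiv 0$ says $(\phi_3+i\phi_4)(\phi_3-i\phi_4)\equiv 0$. Since $\phi_3\pm i\phi_4$ are meromorphic $1$-forms on a connected Riemann surface, one of them must vanish identically; say $\phi_3=i\epsilon\phi_4$ with $\epsilon=\pm1$ (the two cases are symmetric). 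Then the defining relation $\sum_{j=1}^4\phi_j^2\equiv 0$ reduces to $\phi_1^2+\phi_2^2\equiv 0$, so likewise $\phi_1=i\delta\phi_2$ for some $\delta=\pm 1$.

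The next step is to see that this collapses the surface. With $\phi_1=i\delta\phi_2$ and $\phi_3=i\epsilon\phi_4$, the generalized Gauss map $\Phi=[\phi_1,\phi_2,\phi_3,\phi_4]$ has image lying in a single line in $\mathbb{P}^3$ (more precisely in the intersection of $Q$ with the linear subspace cut out by $\zeta_1=i\delta\zeta_2$, $\zeta_3=i\epsilon\zeta_4$, which is a point unless the corresponding forms are proportional — and in any case the image is at most one point or we reduce further). In fact the cleanest route is: since $\phi_2$ and $\phi_4$ cannot both vanish identically (else $\sum|\phi_j|^2\equiv 0$), the ratio $\phi_2/\phi_4$ (or $\phi_4/\phi_2$) is a meromorphic function $h$ on $\Sigma_g$, and up to a constant rotation in the $(\phi_1,\phi_2)$ and $(\phi_3,\phi_4)$ coordinate $2$-planes the Weierstrass data becomes $\phi_2(1,i\delta,\lambda,i\epsilon\lambda)$ for a constant $\lambda$, i.e.\ a constant direction in $\C^2\simeq\R^4$. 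Integrating (\ref{WR}) then shows $X(\Sigma_g\setminus\{p_1,p_2\})$ is contained in a single (real) line or plane in $\R^4$, which is absurd for an immersion of a surface; alternatively, one invokes that a minimal immersion whose Gauss image is a single point is a plane, contradicting $g\geq 1$ together with the presence of two distinct ends.

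An alternative, possibly shorter, finish uses the pole orders directly. By Lemma \ref{ORF}, $[a_g,b_g,c_g,d_g]$ is the Gauss image at $p_2$, and by Proposition \ref{EPW} the data has poles of order exactly $2$ at both $p_1$ and $p_2$; in particular $\eta_2$ genuinely contributes a double pole at $p_2$. If $\phi_3=i\epsilon\phi_4$ and also $\phi_1=i\delta\phi_2$, then at $p_1$ the only possible pole comes from the $\alpha\eta_1$ term in $\phi_1,\phi_2$, and from $\phi_1=i\delta\phi_2$ we'd get $\omega_1+\alpha\eta_1=i\delta(\omega_2-i\alpha\eta_1)=i\delta\omega_2+\delta\alpha\eta_1$, forcing $\delta=1$ and $\omega_1=i\omega_2$; similarly $c_g=i\epsilon d_g$ and a proportionality among the $\omega_i$'s and among $a_g,b_g$. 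One then checks these relations are incompatible with $\min_j\mathrm{ord}_{p_i}\phi_j=-2$ at one of the ends, or with the two ends having Gauss images on a nondegenerate configuration; in the "ends like the Lagrangian catenoid" normalization where the Gauss image at $p_1$ is $[1,-i,0,0]$, one gets $[a_g,b_g,c_g,d_g]$ also of the degenerate form, and tracking residues via the residue formula as in the proof of Lemma \ref{ORF} yields a contradiction.

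The main obstacle I expect is the bookkeeping in passing from "$\phi_3^2+\phi_4^2\equiv 0$" to a genuine contradiction: one has to be careful that the proportionalities $\phi_1=i\delta\phi_2$, $\phi_3=i\epsilon\phi_4$ really do collapse the immersion rather than merely restricting it to $\R^2\subset\R^4$ — and even a planar immersion is a legitimate contradiction only because $\Sigma_g\setminus\{p_1,p_2\}$ cannot be minimally and conformally immersed as a (necessarily flat, hence ruled-out-by-$g\geq1$-and-finite-total-curvature) piece of a plane with two planar ends. So the crux is to phrase the final step so that it uses either (i) $g\geq 1$, or (ii) the existence of two distinct ends with the prescribed pole behavior, to rule out the degenerate Gauss map; both should work, and I would write whichever makes the residue computation shortest, most likely invoking Corollary \ref{Kus4} or the Gauss-Bonnet total-curvature count to exclude a planar image.
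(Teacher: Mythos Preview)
There is a genuine gap. Your deduction that $\phi_1^2+\phi_2^2\equiv 0$ follows from $\phi_3^2+\phi_4^2\equiv 0$ together with $\sum\phi_j^2\equiv 0$ is correct, but the next step --- that the Weierstrass data then becomes $\phi_2\,(1,i\delta,\lambda,i\epsilon\lambda)$ for a \emph{constant} $\lambda$ and hence the image is planar --- is false. The ratio $\phi_4/\phi_2$ is a meromorphic function on $\Sigma_g$, not a constant: for instance $\phi_2$ has a double pole at $p_1$ (from the $\alpha\eta_1$ term) while $\phi_4$ is holomorphic there, so the ratio vanishes at $p_1$ and is nonzero elsewhere. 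More conceptually, the condition $\phi_1^2+\phi_2^2\equiv 0$ and $\phi_3^2+\phi_4^2\equiv 0$ is exactly what characterizes holomorphic (or anti-holomorphic) curves $(f,g):\Sigma\to\C^2$, whose Weierstrass data is $(df,-idf,dg,-idg)$; these are genuine, non-planar minimal immersions --- the Lagrangian catenoid itself and all the examples in Section~\ref{MAINN} are of this type. So neither your ``Gauss map collapses to a point'' route nor your ``incompatible with $\min_j\mathrm{ord}_{p_i}\phi_j=-2$'' route can succeed: holomorphic curves of any genus with many embedded planar ends exist.

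What the paper actually uses is a different and sharper mechanism that exploits the \emph{two-ended} hypothesis and $g\geq 1$. From $\phi_4\equiv\pm i\phi_3$ and the no-real-periods condition one gets that $\phi_3$ and $\phi_4$ have \emph{no periods at all}, so $\int\phi_3$ is a well-defined meromorphic function on $\Sigma_g$. By the form (\ref{GForm}), $\phi_3=\omega_3+c_g\eta_2$ is holomorphic away from $p_2$ and has a double pole there if $c_g\neq 0$; thus $\int\phi_3$ would be a meromorphic function with a single simple pole, forcing $\Sigma_g\simeq\mathbb{P}^1$, a contradiction to $g\geq 1$. The remaining point is to exclude $c_g=d_g=0$; that would make the two asymptotic planes parallel, hence the surface would lie in a hyperplane $\R^3$ by the maximum principle, contradicting the nonexistence of $2$-ended planar-end minimal surfaces in $\R^3$. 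Your proposal does not contain either of these ideas.
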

\begin{proof}
Assume that $\phi_3^2+\phi_4^2\equiv0$. By holomorphicity, this implies that $\phi_4\equiv i\phi_3$ or $\phi_4\equiv -i\phi_3$. Since $\phi_3$ and $\phi_4(=\pm i\phi_3)$ have no real periods, we may deduce that $\phi_3$ and $\phi_4$ have no periods. So we can integrate them to obtain well-defined meromorphic functions $\int\phi_3$ and $\int\phi_4$.

If we prove that $c_g\neq0$ or $d_g\neq0$, then at least one of the meromorphic functions above has a simple pole only at $p_2$. As such meromorphic function can only exist on the Riemann sphere, the proof is done. 

To prove $c_g\neq0$ or $d_g\neq0$, we assume the contrary: $c_g=d_g=0$. Then the two limit ends are parallel so that there is a hyperplane containing both asymptotic planes. Then the defining equation of the hyperplane gives a harmonic function on the surface, which vanishes at infinity. By the maximum principle, it must be identically zero on the surface. This shows that the surface is contained in the hyperplane. 

As there are no minimal immersions with 2 embedded planar ends in $\R^3$, this yields a contradiction and therefore $c_g\neq0$ or $d_g\neq0$.
\end{proof}

Now we specify $\eta_1$ and $\eta_2$ for each conformal structure:
\begin{lem}\label{ETAE}
If $g=1$, then $\Sigma_1$ and $p_1$ can be identified with $R_{\tau}$ and $0$, respectively, where $R_{\tau}\coloneqq\{x+y\tau\in\C\ |\ 0\leq x, y<1\}$ for some $\mbox{Im}(\tau)>0$. Let $z$ be the complex coordinate on $R_{\tau}$. Then $\eta_1$ and $\eta_2$ in (\ref{eta}) can be given as follows:
\begin{align*}
\eta_1=P(z)d z
\end{align*}
and
\begin{align*}
\eta_2=\begin{cases} 
\frac{P'(z)+P'(p_2)+\frac{P''(p_2)}{P'(p_2)}(P(z)-P(p_2))}{(P(z)-P(p_2))^2}d z\ &\text{if}\ \ p_2\not\in\{\frac{1}{2}, \frac{\tau}{2}, \frac{1+\tau}{2}\}\\
\frac{1}{P(z)-e_i}d z\ &\text{if}\ \ P(p_2)=e_i, i=1, 2, 3
\end{cases}.
\end{align*}
Here $P=P(z)$ is the Weierstrass $\mathcal{P}$-function on $R_{\tau}$ with the following identity:
\begin{align}\label{pfunc}
(P')^2=4(P-e_1)(P-e_2)(P-e_3),
\end{align} 
where $e_1\coloneqq P(\frac{1}{2})$, $e_2\coloneqq P(\frac{\tau}{2})$, and $e_3\coloneqq P(\frac{1+\tau}{2})$. 
\end{lem}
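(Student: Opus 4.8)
The plan is to combine the uniformization of genus-1 surfaces with explicit elliptic functions built from the Weierstrass $\mathcal P$-function, and then simply verify the stated $1$-forms have the required pole data.

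First I would recall that any genus-1 Riemann surface is biholomorphic to $\C/\Lambda$ for some lattice $\Lambda$; translating so that $p_1$ is sent to the origin and rescaling $\Lambda$, we may take $\Lambda=\mathbb{Z}+\tau\mathbb{Z}$ with $\mbox{Im}(\tau)>0$, which realizes $\Sigma_1$ as $R_\tau$ with $p_1=0$ and lets the flat coordinate $z$ descend to $R_\tau$. On $R_\tau$ the space $H^0(\Sigma_1,C)$ is spanned by the holomorphic nonvanishing form $dz$. By (\ref{residue}), $\eta_1$ must have a pole of order exactly $2$ at $p_1=0$ with vanishing residue and be holomorphic elsewhere; the $\mathcal P$-function $P$ has precisely these properties (double pole at lattice points, no residue since it is even), so $P\,dz\in H^0(\Sigma_1,C\otimes[2p_1])\setminus H^0(\Sigma_1,C)$ and I would set $\eta_1=P(z)\,dz$. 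Equivalently $\{dz,\,P\,dz\}$ is a basis of the $2$-dimensional space in (\ref{RR}).

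For $\eta_2$ I need a meromorphic $1$-form on $R_\tau$ whose only pole is a double pole at $p_2$, again with vanishing residue, and which is nonholomorphic; I would split into cases according to whether $p_2$ is a $2$-torsion point of $R_\tau$, equivalently whether $P'(p_2)=0$, equivalently whether $P(p_2)\in\{e_1,e_2,e_3\}$. Suppose first $p_2\notin\{\tfrac12,\tfrac\tau2,\tfrac{1+\tau}2\}$, so $P'(p_2)\neq0$, and consider the $1$-form displayed in the statement: it is elliptic, being a rational expression in $P$ and $P'$. Writing $t=z-p_2$ and using $P(z)-P(p_2)=P'(p_2)t+\tfrac12P''(p_2)t^2+\cdots$ together with $P'(z)=P'(p_2)+P''(p_2)t+\cdots$, one checks the numerator is $2P'(p_2)\bigl(1+\tfrac{P''(p_2)}{P'(p_2)}t+\cdots\bigr)$ while the denominator is $P'(p_2)^2t^2\bigl(1+\tfrac{P''(p_2)}{P'(p_2)}t+\cdots\bigr)$, so the quotient equals $\tfrac{2}{P'(p_2)}\,t^{-2}\bigl(1+O(t^2)\bigr)$: a double pole at $p_2$ with no residue. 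Since $P-P(p_2)$ is an elliptic function of order $2$, the only other zero of the denominator is at $z=-p_2$; there, using $P'(-p_2)=-P'(p_2)$ and evenness of $P$, the numerator vanishes to order $\geq2$ against a double zero of the denominator, so $\eta_2$ is holomorphic at $-p_2$, and at $z=0$ it is holomorphic as well since near the pole of $P$ the numerator grows like $P'(z)\sim-2z^{-3}$ against a denominator $\sim z^{-4}$. Hence this $1$-form has pole divisor exactly $2p_2$, vanishing residue, and is nonholomorphic.

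In the remaining case $P(p_2)=e_i$ for some $i$, $p_2$ is a half-period, $P'(p_2)=0$, and $P-e_i$ has a double zero at $p_2$ and no other zero on $R_\tau$; thus $\tfrac{1}{P(z)-e_i}\,dz$ has a double pole at $p_2$ and is holomorphic elsewhere (in particular at $z=0$), and since $p_2$ is a half-period $P$ is even about $p_2$, so its local expansion there has only even powers of $z-p_2$ and the residue vanishes. So $\eta_2=\tfrac{1}{P(z)-e_i}\,dz$ works, and in both cases $\eta_2\in H^0(\Sigma_1,C\otimes[2p_2])\setminus H^0(\Sigma_1,C)$, as demanded by (\ref{eta}). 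The only delicate point is the double cancellation in the generic case: the coefficients in the numerator are chosen precisely so that the $t^{-1}$ term at $p_2$ and the spurious pole at $-p_2$ both disappear; everything else follows from standard properties of $\wp$ and the dimension count in (\ref{RR}).
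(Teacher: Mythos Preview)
Your proof is correct and follows essentially the same approach as the paper: identify $\Sigma_1$ with $R_\tau$ via uniformization, take $\eta_1=P(z)\,dz$, and verify directly that the proposed $\eta_2$ has the right pole divisor with vanishing residue. The only stylistic difference is that the paper computes the divisors of the numerator and denominator globally (e.g.\ observing that the numerator has a double zero at $-p_2$, one further simple zero, and a triple pole at $0$), whereas you carry out local Taylor expansions at $p_2$, $-p_2$, and $0$; both amount to the same verification.
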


\begin{proof}
The first statement follows from the fact that the universal cover of $\Sigma_1$ is $\C$. Moreover, as $P$ is a meromorphic function with a double pole only at $z=0$, $P(z)d z$ satisfies (\ref{eta}). 

On the other hand, in the first case for $\eta_2$, $P(z)-P(p_2)$ has simple zeros at $z=p_2, -p_2$ and a double pole at $z=0$. The numerator $P'(z)+P'(p_2)+\frac{P''(p_2)}{P'(p_2)}(P(z)-P(p_2))$ has a double zero at $z=-p_2$, one simple zero, and an order 3 pole at $z=0$. Therefore the given meromorphic 1-form has a double pole only at $z=p_2$. A similar computation also applies to the second case, and the lemma follows.
\end{proof}

\begin{lem}\label{ETAH}
If $g\geq2$ and $\Sigma_g$ is hyperelliptic, then $\Sigma_g$ can be identified with $\{(x, y)\in\overline{\C}\times\overline{\C}\ |\ y^2=\prod_{j=1}^{2g+2}(x-\lambda_j)\}$ for some pairwise distinct complex numbers $\lambda_1, \cdots, \lambda_{2g+2}$. Moreover, a meromorphic 1-form $\eta_q$ with a double pole only at $q=(x_q, y_q)\in\Sigma_g$ can be given as follows:
\begin{align*}
\eta_q=\begin{cases}
\frac{y+y_q+\frac{d y}{d x}|_q(x-x_q)}{(x-x_q)^2}\frac{d x}{y}\ &\text{if}\ \ y_q\neq0\\
\frac{1}{x-x_q}\frac{d x}{y}\ &\text{if}\ \ y_q=0
\end{cases}.
\end{align*}
\end{lem}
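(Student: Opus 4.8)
The plan is to exhibit an explicit meromorphic 1-form on the hyperelliptic curve $\Sigma_g = \{(x,y) : y^2 = \prod_{j=1}^{2g+2}(x-\lambda_j)\}$ having a double pole only at the prescribed point $q=(x_q,y_q)$ and no residue there, and then to check these properties by a local computation in a coordinate near $q$. Recall that on such a curve the holomorphic differentials are spanned by $x^k\,\frac{\d x}{y}$ for $0\le k\le g-1$, and that $\frac{\d x}{y}$ itself is holomorphic and nonvanishing away from the branch points (where $x$ ceases to be a local coordinate). So multiplying $\frac{\d x}{y}$ by a meromorphic function of $(x,y)$ with the right pole behavior will produce the desired $\eta_q$. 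The two cases in the statement correspond exactly to whether $q$ is a branch point ($y_q=0$, so $x$ is a double cover near $q$ and $x-x_q$ already vanishes to order $2$ at $q$) or a non-branch point ($y_q\neq0$, where $x-x_q$ is a genuine local coordinate).

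First I would treat the branch-point case $y_q=0$. Here $\lambda_j=x_q$ for exactly one $j$, and near $q$ a local coordinate is $t$ with $t^2 = x-x_q$ (up to a unit), so $x-x_q$ vanishes to order $2$ and $y$ vanishes to order $1$ at $q$. The form $\frac{\d x}{y}$ is holomorphic and nonzero at $q$ (standard fact: in the coordinate $t$, $\d x = 2t\,\d t$ cancels the simple zero of $y$). Hence $\frac{1}{x-x_q}\cdot\frac{\d x}{y}$ has a pole of order exactly $2$ at $q$, and at every other point $x-x_q$ is a nonzero finite quantity while $\frac{\d x}{y}$ is at worst a double pole only over $x=\infty$ — but one must confirm no pole is introduced at infinity. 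I would check the behavior at the two points over $x=\infty$: with $2g+2$ branch points, $x=\infty$ is not a branch point, $\frac{\d x}{y}$ decays there (it is one of the holomorphic differentials' building blocks only for $k\le g-1$; for general $k$ it can have poles at infinity, but $k=0$ is safe — $y\sim x^{g+1}$ so $\frac{\d x}{y}\sim x^{-g-1}\d x$ which vanishes), and the extra factor $\frac{1}{x-x_q}\to 0$, so $\eta_q$ is holomorphic at infinity. Finally the residue at $q$ vanishes automatically: $\eta_q$ is even under the hyperelliptic involution $\sigma(x,y)=(x,-y)$ when $y_q=0$ (both $\frac{1}{x-x_q}$ and $\frac{\d x}{y}$ change sign? no — $\frac{\d x}{y}$ is anti-invariant, $\frac{1}{x-x_q}$ is invariant), so I would instead argue the residue vanishes by the residue formula together with the fact that $q$ is the only pole, exactly as in (\ref{residue}); or compute directly in the coordinate $t$ that the Laurent expansion has no $\frac{\d t}{t}$ term.

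Next, the generic case $y_q\neq0$: here $x-x_q$ is a local coordinate at $q$, vanishing to order $1$, and $\frac{\d x}{y}$ is holomorphic and nonvanishing at $q$ since $y(q)=y_q\neq0$. So $\frac{1}{(x-x_q)^2}\frac{\d x}{y}$ has a double pole with a generally nonzero residue; the numerator $y+y_q+\frac{\d y}{\d x}\big|_q (x-x_q)$ is precisely the degree-$\le 1$ Taylor polynomial correction chosen so that the residue cancels. I would Taylor-expand: near $q$, writing $u=x-x_q$, we have $y = y_q + \frac{\d y}{\d x}|_q u + O(u^2)$, hence the numerator equals $2y_q + 2\frac{\d y}{\d x}|_q u + O(u^2)$, and dividing by $u^2$ and by $y = y_q + O(u)$ gives $\big(2y_q + 2\frac{\d y}{\d x}|_q u\big)\big/\big(u^2(y_q + \frac{\d y}{\d x}|_q u + \cdots)\big) = \frac{2}{u^2} + \frac{0}{u} + O(1)$, the $u^{-1}$ coefficient vanishing by design — I would verify this cancellation carefully, as it is the one genuinely computational point. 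Then I would check that the numerator does not introduce poles elsewhere: $y$ is holomorphic away from $x=\infty$, and the factor $\frac{\d x}{y}$ times $\frac{1}{(x-x_q)^2}$ times $y$ produces $\frac{\d x}{(x-x_q)^2}$ which is fine except at the conjugate point $(x_q,-y_q)$ where $(x-x_q)^2$ vanishes but the numerator $y+y_q+\cdots = -y_q+y_q+\cdots$ vanishes to order... only order $1$ generically — so I would double-check that $q$ is truly the only double pole and that at the conjugate point the form has at worst a simple pole or is holomorphic. Also I must confirm no pole at $x=\infty$: $\frac{y}{(x-x_q)^2}\cdot\frac{\d x}{y}=\frac{\d x}{(x-x_q)^2}$ is holomorphic at infinity (it vanishes like $x^{-2}\d x$), and the lower-order numerator terms $\frac{(y_q + \frac{\d y}{\d x}|_q(x-x_q))}{(x-x_q)^2}\cdot\frac{\d x}{y}$ also decay since $y$ grows.

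The main obstacle I anticipate is not any single hard idea but the bookkeeping of pole orders at the two points over $x=\infty$ and at the hyperelliptic-conjugate point $\sigma(q)$ — verifying that the explicit formula really has a pole nowhere except $q$, and there only to order $2$, and that the residue genuinely vanishes. All of this is a routine local-coordinate computation, so I would present it compactly: expand in the local coordinate at each of the finitely many candidate singular points, observe the cancellations, and invoke the residue theorem as a consistency check. The underlying reason the formula works is that it mimics, on the two-sheeted cover, the rational-function construction $\frac{P'(z)+\cdots}{(P(z)-P(p_2))^2}$ from Lemma \ref{ETAE}, with $x$ playing the role of $P$ and $y$ the role of $P'$ via the identity $y^2 = \prod(x-\lambda_j)$ analogous to (\ref{pfunc}).
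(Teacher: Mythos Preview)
Your approach via local-coordinate expansions is valid and different from the paper's. The paper argues by global divisor arithmetic: it records $(y)=\sum(\lambda_j,0)-(g+1)(\infty_1+\infty_2)$, $(\d x)=\sum(\lambda_j,0)-2(\infty_1+\infty_2)$, and $(x-\lambda_j)=2(\lambda_j,0)-\infty_1-\infty_2$, then simply adds to read off $\left(\tfrac{1}{x-\lambda_j}\tfrac{\d x}{y}\right)=g\cdot\infty_1+g\cdot\infty_2-2(\lambda_j,0)$. This handles all points at once (including infinity and the conjugate point) with no case analysis, whereas your method trades that global bookkeeping for explicit Taylor expansions at each candidate singularity. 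Both arrive at the same conclusion; the divisor route is shorter and less error-prone, while yours is more hands-on and makes the residue cancellation transparent.

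There is one concrete gap in your sketch that you must repair, not merely flag. At the conjugate point $\sigma(q)=(x_q,-y_q)$ you guess the numerator $y+y_q+\tfrac{\d y}{\d x}\big|_q(x-x_q)$ vanishes ``only to order~$1$ generically''; that is wrong, and if it were true the form would have a simple pole there, contradicting the lemma. In fact it vanishes to order exactly~$2$: since $\tfrac{\d y}{\d x}=\tfrac{R'(x)}{2y}$ one has $\tfrac{\d y}{\d x}\big|_{\sigma(q)}=-\tfrac{\d y}{\d x}\big|_q$, so with $u=x-x_q$ the expansion of $y$ on that sheet is $-y_q-\tfrac{\d y}{\d x}\big|_q\,u+O(u^2)$, and the numerator becomes $\bigl(-y_q-\tfrac{\d y}{\d x}\big|_q u\bigr)+y_q+\tfrac{\d y}{\d x}\big|_q u+O(u^2)=O(u^2)$. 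This second-order vanishing exactly cancels the $(x-x_q)^2$ in the denominator, so $\eta_q$ is holomorphic at $\sigma(q)$. Once you insert this computation your argument is complete.
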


\begin{proof}
For a general explanation on hyperelliptic Riemann surfaces, one may refer to \cite{GH}. As a similar case to the first one was covered in detail in Lemma \ref{ETAE}, so in this time, the second case is covered in detail. Note that also a similar computation below applies to the first case. 

Let $R(x)\coloneqq\prod_{j=1}^{2g+2}(x-\lambda_j)$. Away from the branch points $(\lambda_j, 0)$, we may write $y=\pm\sqrt{R(x)}$. There are two kinds of infinity as $x\to\infty$ that we will denote by $\infty_1$ and $\infty_2$. Then the divisor $(y)$ is given by $\sum_{j=1}^{2g+2}(\lambda_j, 0)-(g+1)\cdot\infty_1-(g+1)\cdot\infty_2$, and from $d x=\frac{2y}{R'}d y$, we obtain $(d x)=\sum_{j=1}^{2g+2}(\lambda_j, 0)-2\cdot\infty_1-2\cdot\infty_2$. Now we compute
\begin{align*}
\left(\frac{1}{x-\lambda_j}\frac{d x}{y}\right)&=(d x)-(y)-(x-\lambda_j)\\
&=g\cdot\infty_1+g\cdot\infty_2-2\cdot(\lambda_j, 0),
\end{align*}
where we used $(x-\lambda_j)=2\cdot(\lambda_j, 0)-\infty_1-\infty_2$. Therefore $\frac{1}{x-\lambda_j}\frac{d x}{y}$ has a double pole only at $(\lambda_j, 0)$ and it is an appropriate choice for $\eta_{(\lambda_j, 0)}$.
\end{proof}

\section{\textbf{Nonexistence}}\label{MAIN}
\setcounter{equation}{0}
In this section, we deal with nonexistence of complete minimal immersions in $\R^N$ with 2 embedded planar ends. Following Remark \ref{RVD}, it suffices to consider in some 4-dimensional Affine subspace and therefore we assume that $N=4$. Further, every theorem will be stated in $\R^4$, but the same results hold in general $\R^N$. 

We begin by observing the generalized Gauss map. To begin with, consider the map 
\begin{align*}
[\zeta_1, \zeta_2, \zeta_3, \zeta_4]\mapsto \left(\frac{\zeta_3+i\zeta_4}{\zeta_1-i\zeta_2}, \frac{-\zeta_3+i\zeta_4}{\zeta_1-i\zeta_2}\right)
\end{align*}
from $Q\backslash (L_{[1,0]}\cup M_{[1,0]})$ to $\C\times\C$, where $Q\subset\mathbb{P}^3$ is the quadric and $L_{[1,0]}$, $M_{[1,0]}$ are rulings defined in (\ref{ruling}). Since two rulings passing through the point $[\zeta_1, \zeta_2, \zeta_3, \zeta_4]\in Q\backslash(L_{[1, 0]}\cup M_{[1, 0]})$ are given by $L_{[\zeta_3+i\zeta_4, \zeta_1-i\zeta_2]}$ and $M_{[-\zeta_3+i\zeta_4, \zeta_1-i\zeta_2]}$, the above map naturally arises from the ruled structure of $Q$. Moreover, it extends to a biholomorphism from $Q$ onto $\mathbb{P}^1\times\mathbb{P}^1$. 

For each minimal immersion $X: \Sigma_g\backslash\{p_1, p_2\}\to\R^4$ with 2 embedded planar ends, let us denote $(g_1, g_2):\Sigma_g\to\mathbb{P}^1\times\mathbb{P}^1$ as the composition of the biholomorphism $Q\to \mathbb{P}^1\times\mathbb{P}^1$ and the generalized Gauss map $\Sigma_g\to Q\subset\mathbb{P}^3$. Note that we may write
\begin{align}\label{ggg}
g_1=\frac{\phi_3+i\phi_4}{\phi_1-i\phi_2},\ g_2=\frac{-\phi_3+i\phi_4}{\phi_1-i\phi_2},
\end{align}
when $\phi_1-i\phi_2\not\equiv0$.

Now recall that the Gauss curvature $K$ and the normal curvature $K^{\perp}$ satisfy
\begin{align}\label{jacobian}
K=J_{g_1}+J_{g_2},\ K^{\perp}=J_{g_1}-J_{g_2},
\end{align}
where $J_{g_1}$ and $J_{g_2}$ denote the Jacobian of $g_1$ and $g_2$, respectively (see \cite[Proposition~4.5]{HO2}). Let $S\coloneqq X(\Sigma_g\backslash\{p_1, p_2\})$. Then (\ref{jacobian}) and the Gauss-Bonnet formula imply that
\begin{align}\label{degp}
\deg g_1+\deg g_2=-\frac{1}{2\pi}\int_SKd A=2g+2,
\end{align}
where the sign comes from the orientation.

Next, we consider $\int_S K^{\perp}d A$. By the conformal invariance of $K^{\perp}d A$, we may regard $S$ as a surface in $\mathbb{S}^4$ via the stereographic projection. Denote by $\overline{S}$ the compactification of $S$, i.e., $\overline{S}=S\cup\{\infty\}$. It is clear from the definition of the embedded planar end that the normal bundle of $S$ extends to $\overline{S}$. 

Using the fact that $\frac{1}{2\pi}\int_{\overline{S}} K^{\perp}d A$ is twice the number of self-intersections, we get
\begin{align*}
\left|\frac{1}{2\pi}\int_{\overline{S}} K^{\perp}d A\right|=2
\end{align*}
since $S$ is embedded by Corollary \ref{Kus4} and $\infty$ is the only self-intersection point of $\overline{S}$. Hence (\ref{jacobian}) shows that
\begin{align}\label{degm}
|\deg g_1-\deg g_2|=\left|\frac{1}{2\pi}\int_{S} K^{\perp}d A\right|=\left|\frac{1}{2\pi}\int_{\overline{S}} K^{\perp}d A\right|=2.
\end{align}
Combining (\ref{degp}) and (\ref{degm}), we finally obtain
\begin{align}\label{tdeg}
(\deg g_1, \deg g_2)=(g, g+2)\ \text{or}\ (g+2, g).
\end{align}

We are now ready to prove the first main result.
\begin{thm}\label{G1}      
There are no genus 1 complete minimal immersions of finite total curvature in $\R^4$ with 2 embedded planar ends.
\end{thm}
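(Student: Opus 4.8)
The plan is to extract a contradiction directly from the degree identity (\ref{tdeg}), with no further use of the explicit Weierstrass data. Recall that $g_1$ and $g_2$ were defined as the two components of $\Psi\circ\Phi$, where $\Phi\colon\Sigma_g\to Q\subset\mathbb{P}^3$ is the generalized Gauss map and $\Psi\colon Q\to\mathbb{P}^1\times\mathbb{P}^1$ is the biholomorphism. Since $\Phi$ extends holomorphically across the two punctures (finite total curvature) and $\Psi$ is defined on all of $Q$, each $g_j\colon\Sigma_g\to\mathbb{P}^1$ is a genuine holomorphic map on the closed surface $\Sigma_g$, whose mapping degree is exactly the nonnegative integer $\deg g_j$ appearing in (\ref{degp})--(\ref{tdeg}).

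Now specialize to $g=1$. By (\ref{tdeg}) we have $(\deg g_1,\deg g_2)=(1,3)$ or $(3,1)$, so in either case one of the holomorphic maps $g_j\colon\Sigma_1\to\mathbb{P}^1$ has degree one. But a degree-one holomorphic map between compact Riemann surfaces is a biholomorphism (it is unramified, and Riemann--Hurwitz then forces the two genera to be equal), which is impossible because $\Sigma_1$ has genus $1$ while $\mathbb{P}^1$ has genus $0$. Hence no such immersion exists.

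There is essentially no obstacle left; the only points worth spelling out are the two facts feeding into (\ref{tdeg}): that $g_1,g_2$ extend holomorphically to the closed surface $\Sigma_1$, and that $S$ is embedded, so that $|\deg g_1-\deg g_2|=2$ rather than $4$. The latter is precisely where Corollary \ref{Kus4} is used: a genus-one surface is not a union of $2$-planes, so the inequality $n(M)\le e(M)=2$ is strict and $n(M)=1$ (if $|\deg g_1-\deg g_2|$ were $4$, one of $g_1,g_2$ would be constant, which the self-intersection count forbids). The explicit $\eta_1,\eta_2$ of Lemma \ref{ETAE} are not needed here; they enter only in the hyperelliptic analysis of Theorem \ref{G2}.
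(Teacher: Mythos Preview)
Your argument is correct and considerably more economical than the paper's. The paper proves Theorem~\ref{G1} by writing out the Weierstrass data explicitly via Lemma~\ref{ETAE}, splitting into two cases according to whether $p_2$ is a $2$-torsion point of the torus, and in each case deriving a contradiction either by an algebraic identity among $\mathcal{P}$-functions (Case~(i), reducing to $\phi_3^2+\phi_4^2\equiv0$ and invoking Lemma~\ref{contra}) or by the symmetry $z\mapsto-z$ together with Corollary~\ref{Kus4} (Case~(ii)). You bypass all of this: (\ref{tdeg}), already established in the text just before the theorem, forces one of the holomorphic maps $g_1,g_2\colon\Sigma_1\to\mathbb{P}^1$ to have degree~$1$, and a degree-one holomorphic map from a genus-one surface to $\mathbb{P}^1$ cannot exist. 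This is a genuinely different route---purely topological rather than computational---and it explains transparently why genus one is special, whereas the paper's method is of a piece with the hyperelliptic calculation in Theorem~\ref{G2}. The trade-off is that your proof leans entirely on (\ref{degm}), specifically on the claim that the single double point of $\overline{S}$ at $\infty$ contributes exactly $\pm1$; the paper asserts this without verifying that the two asymptotic planes meet transversally at $\infty$, so one might wish to supply that detail, but any such gap lies in the paper's derivation of (\ref{tdeg}) rather than in your use of it. One minor slip: Lemma~\ref{ETAE} is the elliptic (genus-$1$) lemma, not the hyperelliptic one---it is Lemma~\ref{ETAH} that feeds into Theorem~\ref{G2}.
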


\begin{proof}
Suppose the contrary. Let $X:\Sigma_1\backslash\{p_1, p_2\}\to\R^4$ be a genus 1 minimal immersion as in Section \ref{LEMM} so that the Gauss image at $p_1$ is $[1, -i, 0, 0]\in\mathbb{P}^3$. By Lemma \ref{ETAE}, we may consider $X: R_{\tau}\backslash\{0, p_2\}\to\R^4$, where $R_{\tau}=\{x+y\tau\in\C\ |\ 0\leq x, y<1\}$ with $\mbox{Im}(\tau)>0$. 

Since holomorphic 1-forms on $R_{\tau}$ are given by complex multiples of $d z$, it follows from (\ref{GForm}) that the Weierstrass data has the following form:
\begin{align*}
\begin{cases}
\phi_1=a_0d z+\alpha\eta_1+a_1\eta_2\\
\phi_2=b_0d z-i\alpha\eta_1+b_1\eta_2\\
\phi_3=c_0d z+c_1\eta_2\\
\phi_4=d_0d z+d_1\eta_2
\end{cases}
\end{align*}
such that $\alpha\neq0$ and $a_1^2+b_1^2+c_1^2+d_1^2=0$.

We may consider two cases depending on the value of $p_2$. In each case, we take $\eta_1$ and $\eta_2$ as in Lemma \ref{ETAE}. 

\underline{Case (i)}: Assume that $p_2\not\in\{\frac{1}{2}, \frac{\tau}{2}, \frac{1+\tau}{2}\}$. Let $\eta_1=P(z)d z$ and
\begin{align*}
\eta_2=\frac{P'(z)+P'(p_2)+\frac{P''(p_2)}{P'(p_2)}(P(z)-P(p_2))}{(P(z)-P(p_2))^2}d z(=:H(z)d z).
\end{align*}
Then $\sum_{j=1}^4\phi_j^2\equiv0$ if and only if
\begin{align*}
(a_0+\alpha P(z)+a_1H(z))^2&+(b_0-i\alpha P(z)+b_1H(z))^2\\
&+(c_0+c_1H(z))^2+(d_0+d_1H(z))^2\equiv0,
\end{align*}
which is equivalent to
\begin{align*}
&\left[(a_0^2+b_0^2+c_0^2+d_0^2)+2\alpha(a_0-ib_0)P(z)\right]\\
&+2\left[(a_0a_1+b_0b_1+c_0c_1+d_0d_1)+\alpha(a_1-ib_1)P(z)\right]H(z)\equiv0.
\end{align*}
Multiplying by $(P(z)-P(p_2))^2$ and separating the numerator of $H(z)$, we have
\begin{align*}
&-2\left[(a_0a_1+\cdots+d_0d_1)+\alpha(a_1-ib_1)P(z)\right]P'(z)\\
&\equiv\left[(a_0^2+\cdots+d_0^2)+2\alpha(a_0-ib_0)P(z)\right](P(z)-P(p_2))^2\\
&+2\left[(a_0a_1+\cdots+d_0d_1)+\alpha(a_1-ib_1)P(z)\right]\left(P'(p_2)+\frac{P''(p_2)}{P'(p_2)}(P(z)-P(p_2))\right).
\end{align*}
Squaring both sides and using the identity (\ref{pfunc}), we obtain
\begin{align*}
&16\left[(a_0a_1+\cdots+d_0d_1)+\alpha(a_1-ib_1)P(z)\right]^2(P(z)-e_1)(P(z)-e_2)(P(z)-e_3)\\
&\equiv\Bigg{[}\left[(a_0^2+\cdots+d_0^2)+2\alpha(a_0-ib_0)P(z)\right](P(z)-P(p_2))^2\\
&+2\left[(a_0a_1+\cdots+d_0d_1)+\alpha(a_1-ib_1)P(z)\right]\left(P'(p_2)+\frac{P''(p_2)}{P'(p_2)}(P(z)-P(p_2))\right)\Bigg{]}^2.
\end{align*}

We may consider each side as a polynomial of $P=P(z)$. As the Weierstrass $\mathcal{P}$-function takes infinitely many values, both sides must be the same polynomial. In the end, we get $a_0-ib_0=0$, $a_1-ib_1=0$, $c_0c_1+d_0d_1=0$, and $c_0^2+d_0^2=0$. Therefore $\phi_3^2+\phi_4^2\equiv0$. This is impossible by Lemma \ref{contra}.

\underline{Case (ii)}: Assume that $p_2\in\{\frac{1}{2}, \frac{\tau}{2}, \frac{1+\tau}{2}\}$. Let $\eta_1$ and $\eta_2$ be given by
\begin{align*}
\eta_1=P(z)d z,\ \eta_2=\frac{1}{P(z)-P(p_2)}d z.
\end{align*}
Consider the transformation $I$ on the universal cover $\C$ defined by $I(z)=-z$. Then it gives rise to an automorphism on $\Sigma_1$(or equivalently, $R_{\tau}$), which is also denoted by $I$ by abuse of notation. Since $P(z)$ is an even function, we see $I^*d z=-d z,\ I^*\eta_1=-\eta_1,\ \text{and}\ I^*\eta_2=-\eta_2$. Thus it follows that
\begin{align}\label{pullb}
I^*\phi_j=-\phi_j\ \forall j=1, 2, 3, 4.
\end{align}

Suppose that the immersion is given by
\begin{align*}
X(z)=\mbox{Re}\int_{z_0}^z(\phi_1, \phi_2, \phi_3, \phi_4)
\end{align*}
for some $z_0\in R_{\tau}\backslash\{0, p_2\}$ (see (\ref{WR})). We compute
\begin{align*}
X(I(z))&=\mbox{Re}\int_{z_0}^{I(z)}(\phi_1, \phi_2, \phi_3, \phi_4)\\
&=\mbox{Re}\int_{z_0}^{I(z_0)}(\phi_1, \phi_2, \phi_3, \phi_4)+\mbox{Re}\int_{I(z_0)}^{I(z)}(\phi_1, \phi_2, \phi_3, \phi_4)\\
&=V_{z_0}+\mbox{Re}\int_{z_0}^z(I^*\phi_1, I^*\phi_2, I^*\phi_3, I^*\phi_4)\\
&=V_{z_0}-\mbox{Re}\int_{z_0}^z(\phi_1, \phi_2, \phi_3, \phi_4)\\
&=V_{z_0}-X(z),
\end{align*}
where $V_{z_0}\coloneqq\int_{z_0}^{I(z_0)}(\phi_1, \phi_2, \phi_3, \phi_4)$, and we used (\ref{pullb}) in the fourth equality. Since the automorphism $I$ leaves each element of $\{0, \frac{1}{2}, \frac{\tau}{2}, \frac{1+\tau}{2}\}$ unchanged, we obtain
\begin{align*}
X(q)=\frac{1}{2}V_{z_0}\ \forall\ q\in\{\frac{1}{2}, \frac{\tau}{2}, \frac{1+\tau}{2}\}\backslash\{p_2\}.
\end{align*}
Therefore the image of $X$ has a point of multiplicity 2. By the equality case in Corollary \ref{Kus4}, it should be a union of planes, which is also a contradiction.
\end{proof}
Note that there is no assumption on the position of ends in the above theorem. One of the reasons for this is that when the genus is 1, there are plenty of conformal symmetries so that several cases could be excluded without calculating the periods. However, we only have finitely many automorphisms on a Riemann surface with the genus greater than or equal to 2. This makes the period computation necessary for many cases. 

We could overcome this difficulty by assuming that the conformal structure is hyperelliptic and by restricting the type of ends. Then we have the hyperelliptic involution so that a similar idea can be applied. 
\begin{thm}\label{G2}
If $g\neq4$, there are no genus $g$ hyperelliptic complete minimal immersions of finite total curvature in $\R^4$ with 2 embedded planar ends like the Lagrangian catenoid.
\end{thm}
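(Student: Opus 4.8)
The plan is to exploit the hyperelliptic involution together with the very explicit Weierstrass data supplied by Lemmas~\ref{ORF} and~\ref{ETAH}, splitting the argument according to whether the ends lie over branch points of the hyperelliptic projection. Suppose for contradiction that such an immersion $X\colon\Sigma_g\setminus\{p_1,p_2\}\to\R^4$ exists with $g\ge2$ and $g\ne4$. I would first realize $\Sigma_g$ as $y^2=\prod_{j=1}^{2g+2}(x-\lambda_j)$, write $J\colon(x,y)\mapsto(x,-y)$ for the hyperelliptic involution (so that $J^*\omega=-\omega$ for every holomorphic $1$-form $\omega$), rotate so the Gauss image at $p_1$ is $[1,-i,0,0]$, and use ``ends like the Lagrangian catenoid'' --- i.e.\ the Gauss image at $p_2$ lies on the ruling $L_{[1,0]}$ --- to force, in the normal form (\ref{GForm}), the relations $b_g=-ia_g$ and $d_g=-ic_g$, with $c_g\ne0$ by Lemma~\ref{contra}.

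The second step is to expand $\sum_{j=1}^4\phi_j^2\equiv0$. With these coefficients the $\eta_1^2$, $\eta_2^2$ and $\eta_1\eta_2$ contributions cancel, leaving the identity
\[
\Omega+2\,\tilde A\,\eta_1+2\,\tilde B\,\eta_2\equiv0 ,
\]
where $\Omega:=\sum_j\omega_j^2$ is a holomorphic quadratic differential with $J^*\Omega=\Omega$, and $\tilde A:=\alpha(\omega_1-i\omega_2)$, $\tilde B:=a_g(\omega_1-i\omega_2)+c_g(\omega_3-i\omega_4)$ are holomorphic $1$-forms. Here $\tilde A\not\equiv0$ (otherwise $\phi_3^2+\phi_4^2\equiv0$, against Lemma~\ref{contra}) and $\tilde B\not\equiv0$ (otherwise $g_2$ would be constant, against $\deg g_2\in\{g,g+2\}$ from (\ref{tdeg})). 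Since the left side must be holomorphic while $\eta_1,\eta_2$ have double poles at $p_1,p_2$, this forces $\tilde A$ to vanish to order $\ge2$ at $p_1$ and $\tilde B$ to vanish to order $\ge2$ at $p_2$; writing $\tilde A=\tilde a(x)\,dx/y$ this says $(x-x_1)^2\mid\tilde a(x)$ for $x_1:=x(p_1)$, and symmetrically for $\tilde B$ at $x_2:=x(p_2)$.

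The crux --- the step I expect to be the main obstacle --- is to prove that $p_1$ is a branch point if and only if $p_2$ is, by extracting the $J$-anti-invariant part of the identity above. If $p_1$ is \emph{not} a branch point, then $\eta_1$ has (by Lemma~\ref{ETAH}) a summand with $y$ in the numerator, and a short computation shows the $J$-anti-invariant part of $\tilde A\eta_1$ equals $\hat a(x)\,(dx)^2/y$, where $\tilde a=(x-x_1)^2\hat a$; this is nonzero because $\tilde A\not\equiv0$. If on the other hand $p_2$ \emph{is} a branch point, then $\eta_2$ is a rational function of $x$ times $dx/y$, so $\tilde B\eta_2$ is $J$-invariant. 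Taking the $J$-anti-invariant part of the identity (using $J^*\Omega=\Omega$) would then give $\hat a(x)\,(dx)^2/y\equiv0$, hence $\tilde A\equiv0$ --- a contradiction; the reverse implication is symmetric. Moreover, in the remaining case where neither $p_i$ is a branch point, the same extraction yields $\hat b=-\hat a$, where $\tilde B=(x-x_2)^2\hat b(x)\,dx/y$.

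Finally I would conclude in each of the two surviving cases. If $p_1=(\lambda_j,0)$ and $p_2=(\lambda_k,0)$ are both branch points, then $J^*\eta_1=-\eta_1$ and $J^*\eta_2=-\eta_2$ by Lemma~\ref{ETAH}, so $J^*\phi_j=-\phi_j$ for all $j$; arguing exactly as in the proof of Theorem~\ref{G1} one gets $X\circ J=V-X$ for a constant $V\in\R^4$, so every fixed point of $J$ in the domain --- and there are $2g+2-2=2g\ge2$ of them, namely the branch points other than $p_1,p_2$ --- is mapped to $\tfrac12V$. Thus $X$ has a point of multiplicity $\ge2$, and the equality case of Corollary~\ref{Kus4} forces the surface to be a union of $2$-planes, contradicting $g\ge2$. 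If instead neither $p_i$ is a branch point, then $x_1\ne x_2$ (otherwise $g_2$ is constant), and using $\hat b=-\hat a$ one finds
\[
g_2=-\frac{\phi_3-i\phi_4}{\phi_1-i\phi_2}=\frac{\alpha(x-x_2)^2+a_g(x-x_1)^2}{c_g(x-x_1)^2},
\]
a degree-$2$ rational function of $x$, whence $\deg g_2=4$ since $x$ has degree $2$ on $\Sigma_g$. But this case requires $g\ge3$, because $\tilde a$ has degree $\le g-1$ and is divisible by $(x-x_1)^2$; combined with $\deg g_2\in\{g,g+2\}$ from (\ref{tdeg}) this forces $g=4$, contradicting $g\ne4$. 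The one genuinely delicate point in this scheme is the $J$-anti-invariance bookkeeping of the third step; granted the hyperelliptic normal forms of $\eta_1,\eta_2$ and the relation (\ref{tdeg}), the rest is routine.
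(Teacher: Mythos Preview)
Your proposal is correct and follows essentially the same route as the paper: the same three-way case split according to which of $p_1,p_2$ are Weierstrass points, the same involution argument in the ``both branch points'' case, and the same endgame of showing one of $g_1,g_2$ has degree~$4$ in the generic case, which together with (\ref{tdeg}) forces $g=4$. Your use of the $J$-anti-invariant projection to obtain $\hat b=-\hat a$ and to dispose of the mixed case is exactly the paper's ``separate $y$, square, and use that $R(x)$ is square-free'' manoeuvre rewritten invariantly; the paper's identity (\ref{eqeq}) is precisely your $\hat b=-\hat a$ after clearing denominators. So the two proofs coincide, yours being a slightly cleaner packaging of Case~(ii).

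One small point to tighten: from $a_g-ib_g=0$ and $a_g^2+b_g^2+c_g^2+d_g^2=0$ you only get $d_g=\pm ic_g$; the condition ``Gauss images on the same ruling'' allows either sign, since $[1,-i,0,0]$ lies on both $L_{[1,0]}$ and $M_{[1,0]}$. Your assumption $d_g=-ic_g$ is legitimate, but you should remark that it is WLOG after applying the isometry $(x_1,x_2,x_3,x_4)\mapsto(x_1,x_2,x_3,-x_4)$ of $\R^4$, which fixes $[1,-i,0,0]$ and interchanges the two signs (equivalently, swaps the roles of $g_1$ and $g_2$). The paper sidesteps this by working with $c_g\phi_3+d_g\phi_4$, which yields $g_1$ or $g_2$ without committing to a sign.
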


\begin{proof}
Suppose that such minimal immersion exists. Let $X: \Sigma_g\backslash\{p_1, p_2\}\to\R^4$ be a genus g complete minimal immersion of finite total curvature with 2 embedded planar ends like the Lagrangian catenoid satisfying $g\neq4$. We assume that the Gauss image at $p_1$ is $[1, -i, 0, 0]\in\mathbb{P}^3$ as in Section \ref{LEMM}.

Following Lemma \ref{ETAH}, we from now on identify $\Sigma_g$ with 
\begin{align*}
\{(x, y)\in\overline{\C}\times\overline{\C}\ |\ y^2=R(x)\coloneqq\prod_{j=1}^{2g+2}(x-\lambda_j)\}
\end{align*} 
for some pairwise distinct complex numbers $\lambda_1, \cdots, \lambda_{2g+2}$. We may assume that $p_1, p_2\in\Sigma_g\backslash\{\infty_1, \infty_2\}$ by applying an automorphism on the first component $\overline{\C}$. On $\Sigma_g$, it is well-known that holomorphic 1-forms are generated by
\begin{align*}
\frac{d x}{y},\ x\frac{d x}{y},\ \cdots,\ x^{g-1}\frac{d x}{y}.
\end{align*}
Therefore, by (\ref{GForm}) with these holomorphic 1-forms, the Weierstrass data can be written as follows:
\begin{align*}
\begin{cases}
\phi_1=A(x)\frac{d x}{y}+\alpha\eta_1+a_g\eta_2\\
\phi_2=B(x)\frac{d x}{y}-i\alpha\eta_1+b_g\eta_2\\
\phi_3=C(x)\frac{d x}{y}+c_g\eta_2\\
\phi_4=D(x)\frac{d x}{y}+d_g\eta_2
\end{cases}
\end{align*}
such that $\alpha\neq0$ and $a_g^2+b_g^2+c_g^2+d_g^2=0$, and where $A(x)$, $B(x)$, $C(x)$, and $D(x)$ are polynomials of degree $\leq g-1$. Moreover, it should satisfy 
\begin{align}\label{Lagend}
a_g-ib_g=0 
\end{align}
to have the ends like the Lagrangian catenoid.

Now we consider three cases depending on the position of $p_1=(x_1, y_1)$ and $p_2=(x_2, y_2)$:
(i) $y_1\neq0$, $y_2\neq0$, (ii) $y_1\neq0$, $y_2=0$ or $y_1=0$, $y_2\neq0$, (iii) $y_1=0$, $y_2=0$.

\underline{Case (i)}: $y_1\neq0$, $y_2\neq0$. By Lemma \ref{ETAH} we may take
\begin{align*}
\eta_1=\frac{y+y_1+h_1(x-x_1)}{(x-x_1)^2}\frac{d x}{y},\ \eta_2=\frac{y+y_2+h_2(x-x_2)}{(x-x_2)^2}\frac{d x}{y},
\end{align*}
where $h_1\coloneqq\frac{d y}{d x}|_{p_1}$ and $h_2\coloneqq\frac{d y}{d x}|_{p_2}$. Then $\sum_{j=1}^4\phi_j^2\equiv0$ implies that
\begin{align*}
&\left(A(x)^2+B(x)^2+C(x)^2+D(x)^2\right)\\
&+2\left(a_gA(x)+b_gB(x)+c_gC(x)+d_gD(x)\right)\cdot\frac{y+y_2+h_2(x-x_2)}{(x-x_2)^2}\\
&+2\alpha\left(A(x)-iB(x)\right)\cdot\frac{y+y_1+h_1(x-x_1)}{(x-x_1)^2}\equiv0,
\end{align*}
where we used (\ref{Lagend}). 

Multiply by $(x-x_1)^2(x-x_2)^2$ and separate $y$ as we did in the proof of Theorem \ref{G1}. Then it follows that
\begin{align*}
-2&y\left[\left(a_gA(x)+b_gB(x)+c_gC(x)+d_gD(x)\right)(x-x_1)^2+\alpha\left(A(x)-iB(x)\right)(x-x_2)^2\right]\\
\equiv&\Big{[}\left(A(x)^2+B(x)^2+C(x)^2+D(x)^2\right)(x-x_1)^2(x-x_2)^2\\
&+2\left(a_gA(x)+b_gB(x)+c_gC(x)+d_gD(x)\right)(y_2+h_2(x-x_2))(x-x_1)^2\\
&+2\alpha\left(A(x)-iB(x)\right)(y_1+h_1(x-x_1))(x-x_2)^2\Big{]}
\end{align*}
and if we square both sides, we get
\begin{align*}
4&R(x)\left[\left(a_gA(x)+b_gB(x)+c_gC(x)+d_gD(x)\right)(x-x_1)^2+\alpha\left(A(x)-iB(x)\right)(x-x_2)^2\right]^2\\
\equiv&\Big{[}\left(A(x)^2+B(x)^2+C(x)^2+D(x)^2\right)(x-x_1)^2(x-x_2)^2\\
&+2\left(a_gA(x)+b_gB(x)+c_gC(x)+d_gD(x)\right)(y_2+h_2(x-x_2))(x-x_1)^2\\
&+2\alpha\left(A(x)-iB(x)\right)(y_1+h_1(x-x_1))(x-x_2)^2\Big{]}^2
\end{align*}
since $y^2=R(x)$. If we consider each side as a polynomial of $x$, then they must be the same polynomial. As $R(x)$ is square-free, we deduce that
\begin{align*}
\left(a_gA(x)+b_gB(x)+c_gC(x)+d_gD(x)\right)(x-x_1)^2+\alpha\left(A(x)-iB(x)\right)(x-x_2)^2\equiv0.
\end{align*}
By using (\ref{Lagend}), it can be expressed as
\begin{align}\label{eqeq}
\left(A(x)-iB(x)\right)(a_g(x-x_1)^2+\alpha(x-x_2)^2)+\left(c_gC(x)+d_gD(x)\right)(x-x_1)^2\equiv0.
\end{align}

If $A(x)-iB(x)\equiv0$, then 
\begin{align*}
\phi_1-i\phi_2\equiv(A(x)-iB(x))\frac{d x}{y}\equiv0,
\end{align*} 
which implies that $\phi_3^2+\phi_4^2\equiv0$. This is impossible by Lemma \ref{contra}. Therefore $A(x)-iB(x)\not\equiv0$. 

Since $(x-x_1)^2$ divides $A(x)-iB(x)$ in (\ref{eqeq}), we see from
\begin{align*}
2\leq \deg(A(x)-iB(x))\leq g-1 
\end{align*}
that $g\geq3$. Moreover, it follows from (\ref{eqeq}) that
\begin{align*}
\frac{c_g\phi_3+d_g\phi_4}{\phi_1-i\phi_2}=\frac{c_gC(x)+d_gD(x)}{A(x)-iB(x)}=-a_g-\alpha\cdot\frac{(x-x_2)^2}{(x-x_1)^2}
\end{align*}
is of degree 4. Therefore $g_1$ or $g_2$ is of degree 4 by (\ref{ggg}). Then (\ref{tdeg}) and $g\geq3$ imply that $g=4$, which contradicts to the assumption $g\neq4$. 

\underline{Case (ii)}: $y_1\neq0$, $y_2=0$ or $y_1=0$, $y_2\neq0$. First, assume that $y_1\neq0$ and $y_2=0$. Let
\begin{align*}
\eta_1=\frac{y+y_1+h_1(x-x_1)}{(x-x_1)^2}\frac{d x}{y},\ \eta_2=\frac{1}{x-x_2}\frac{d x}{y}
\end{align*}
as in Lemma \ref{ETAH}. By a similar computation used to obtain (\ref{eqeq}), we eventually get
\begin{align*}
A(x)-iB(x)\equiv0,
\end{align*}
and it follows that $\phi_1-i\phi_2\equiv0$. Therefore $\phi_3^2+\phi_4^2\equiv0$, which yields a contradiction by Lemma \ref{contra}. 

For the case $y_1=0$ and $y_2\neq0$, we similarly obtain 
\begin{align*}
a_gA(x)+b_gB(x)+c_gC(x)+d_gD(x)\equiv0
\end{align*} 
so that $a_g\phi_1+b_g\phi_2+c_g\phi_3+d_g\phi_4\equiv0$. This implies that the generalized Gauss map should lie in one of the two rulings passing through $[a_g, b_g, c_g, d_g]\in Q$. Since the Gauss image at $p_1$ is $[1, -i, 0, 0]\in Q$, we can conclude that $\phi_1-i\phi_2\equiv0$. Therefore we also have a contradiction.

\underline{Case (iii)}: $y_1=0$, $y_2=0$. Let
\begin{align*}
\eta_1=\frac{1}{x-x_1}\frac{d x}{y},\ \eta_2=\frac{1}{x-x_2}\frac{d x}{y}.
\end{align*}
Consider the hyperelliptic involution $I: \Sigma_g\to\Sigma_g$ defined by $I(x, y)=(x, -y)$. Then branch points are the fixed points of $I$. We also have $I^*\phi_j=-\phi_j$ $\forall j=1, 2, 3, 4$. Exactly the same computation in case (ii) of the proof of Theorem \ref{G1} shows that the image of $X$ has a point of multiplicity at least $2g$. It contradicts Corollary \ref{Kus4}, and the theorem is proved.
\end{proof}
\begin{rmk}
\textup{It is still an open problem for $g=4$ in Theorem \ref{G2}. Case (ii) and (iii) in the proof are true even in $g=4$, but case (i) requires period computations. On the other hand, for non-hyperelliptic cases, we also face period problems. For instance, if $g=3$, then it has an advantage that the canonical curve is given by a plane quartic curve. However, one can check by an explicit computation that only applying the algebraic computation is not sufficient to rule out all possible cases.}
\end{rmk}

\section{\textbf{Examples with $d\geq3$ embedded planar ends in $\R^4$}}\label{MAINN}
\setcounter{equation}{0}
In this section, we construct some minimal surfaces in $\R^4$ with more than 2 embedded planar ends. All examples are given by holomorphic curves in $\C^2$. Some of them correspond to cases where the nonexistence was proved in $\R^3$. 

\begin{prop}
Let $F: \C\backslash\{z_1, z_2\}\to\C^2$ be the map defined by
\begin{align*}
F(z)=\left(-\frac{a_1}{z-z_1}-\frac{a_2}{z-z_2}+a_3z, -\frac{b_1}{z-z_1}-\frac{b_2}{z-z_2}+b_3z\right)
\end{align*}
for some $z_1, z_2\in\C\backslash\{0\}$. Suppose that $a_1b_2-b_1a_2=0$ and 
\begin{align*} 
\begin{pmatrix}
a_1 & a_2 & a_3\\
b_1 & b_2 & b_3
\end{pmatrix}
\end{align*}
is of rank 2. Then $F$ represents a genus 0 complete minimal embedding of finite total curvature with 3 embedded planar ends. Moreover, two of the three ends are parallel.
\end{prop}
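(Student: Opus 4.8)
The plan is to verify the three defining properties of a generalized Weierstrass representation on $\overline{\C}=\mathbb{P}^1$, identify the three ends, check the embedded planar end condition via Proposition \ref{EPW}, and finally argue embeddedness and the parallelism of two ends. First I would write down the Weierstrass data: since $X=\mathrm{Re}\,F$ up to the identification $\C^2\simeq\R^4$, the data is $(\phi_1,\phi_2,\phi_3,\phi_4)=2F_z\,\d z$, i.e.
\begin{align*}
\phi_1+i\phi_2 &= 2\!\left(\tfrac{a_1}{(z-z_1)^2}+\tfrac{a_2}{(z-z_2)^2}+a_3\right)\d z, \\
\phi_3+i\phi_4 &= 2\!\left(\tfrac{b_1}{(z-z_1)^2}+\tfrac{b_2}{(z-z_2)^2}+b_3\right)\d z,
\end{align*}
with $\phi_2=-i(\cdots)$, $\phi_4=-i(\cdots)$ chosen so that these are the real and imaginary parts; the holomorphicity of $F$ forces $\sum\phi_j^2\equiv0$ automatically, and $\sum|\phi_j|^2>0$ since $F$ is an immersion. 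The third end is at $z=\infty$, coming from the linear terms $a_3z, b_3z$: in the coordinate $w=1/z$ one sees a double pole there. The rank-$2$ hypothesis on the coefficient matrix is exactly what guarantees $\d F\neq0$ everywhere (so $X$ is a genuine immersion) and that the image is not contained in a complex line, so the total curvature is genuinely that of a $3$-ended surface.

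Next I would check condition (\ref{EPWD}) at each of $z_1, z_2, \infty$. At $z_1$: the principal part of $F$ is $-\frac{(a_1,b_1)}{z-z_1}$, so $F_z$ has a double pole with no residue (the Laurent expansion of $\frac{1}{(z-z_1)^2}$ has no $\frac{1}{z-z_1}$ term), giving $\mathrm{ord}_{z_1}\phi_j\geq -2$ with equality for some $j$ (here we use $(a_1,b_1)\neq(0,0)$, which follows from rank $2$ together with $a_1b_2=b_1a_2$ — if $a_1=b_1=0$ then the matrix would have rank $\le 2$ but we would need $(a_2,b_2)$ and $(a_3,b_3)$ independent, which is fine, so I should instead argue: if one of the pairs $(a_i,b_i)$, $i=1,2$, vanished, the corresponding point would not be an end; the statement implicitly assumes all three are genuine ends, or else $(a_1,b_1),(a_2,b_2)\ne 0$ follows because $z_1,z_2$ are removed from the domain). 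The residue of $\phi_j$ at $z_1$ is zero because $F_z$ has only a $(z-z_1)^{-2}$ singular term. Identically at $z_2$. At $\infty$: with $w=1/z$, $\d z=-w^{-2}\d w$ and $a_3z\,\d z = -a_3 w^{-3}\d w$, so $\phi_j$ has a pole of order $3$ at $w=0$ — wait, that is order $-3$, not $-2$. I need to recheck: actually $2F_z\,\d z$ in the $z$-chart, expressed in $w$, is $2F_z(1/w)\cdot(-w^{-2})\d w$; since $F_z = a_3 + O(z^{-2}) = a_3 + O(w^2)$ near $\infty$, this is $-2a_3 w^{-2}\d w + O(1)\d w$, so the pole at $\infty$ has order exactly $2$ and no residue. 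Good — so (\ref{EPWD}) holds at all three punctures, and Proposition \ref{EPW} gives three embedded planar ends; completeness and finite total curvature are standard for meromorphic data on a compact surface. By Gauss–Bonnet the total curvature is $-4\pi(k-1+g)=-8\pi$ with $g=0$, $k=3$, consistent.

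For embeddedness and parallelism I would argue as follows. The condition $a_1b_2-b_1a_2=0$ means the residue vectors $(a_1,b_1)$ and $(a_2,b_2)$ at $z_1$ and $z_2$ are $\C$-proportional; geometrically the asymptotic planes of the two ends $z=z_1$ and $z=z_2$ are translates of the same complex line in $\C^2$, hence parallel (this gives the last sentence of the statement). For embeddedness: $F$ is a rational map $\mathbb{P}^1\to\mathbb{P}^1\times\mathbb{P}^1$ followed by an affine chart; I would check $F$ is injective on $\C\setminus\{z_1,z_2\}$ by a direct argument — if $F(p)=F(q)$ with $p\ne q$, then both coordinate functions agree, and using $a_1b_2=a_2b_1$ one can take a suitable $\C$-linear combination $\beta_1(\text{first coord})+\beta_2(\text{second coord})$ killing one of the simple-pole terms, reducing to a degree-$2$ rational function, and then analyze the remaining equations; alternatively, invoke that a $1$-ended-at-$\infty$, $3$-ended planar minimal sphere with the residue alignment is forced to be embedded, or cite Corollary \ref{Kus4} style reasoning — but the cleanest is the explicit injectivity computation. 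The main obstacle is precisely this embeddedness verification: unlike the nonexistence theorems where Corollary \ref{Kus4} does the work, here one must exhibit injectivity (and transversality of the ends at $\infty$, i.e.\ that $\overline S$ has no worse singularity than a normal crossing), which requires either a genuine (if routine) computation with the rational map or a more clever use of the degenerate-residue hypothesis to linearly eliminate a pole and drop the degree. I would carry out the linear-elimination trick: choose coordinates on $\C^2$ so that after a linear change the second component has no $\frac{1}{z-z_1}$ and no $\frac{1}{z-z_2}$ term (possible since the two residue vectors are parallel), making that component a linear function $cz+d$, so $z$ is recovered from it, giving injectivity immediately.
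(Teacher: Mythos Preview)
Your proposal is essentially correct, and the overall architecture (write down the data, verify Proposition~\ref{EPW} at $z_1,z_2,\infty$, read off parallelism from the proportionality of $(a_1,b_1)$ and $(a_2,b_2)$, then establish immersion and injectivity) matches the paper. The self-correction at $\infty$ lands on the right computation.

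Where you diverge from the paper is in the immersion/embeddedness step. The paper argues purely with the \emph{right} kernel of the coefficient matrix: since $a_1b_2-a_2b_1=0$, the $2\times 2$ block on the first two columns has rank $\le 1$, so the one-dimensional kernel is spanned by a vector of the form $(*,*,0)^T$. Then a common zero of $\phi_a,\phi_b$ at some $z_0$ would put $\bigl((z_0-z_1)^{-2},(z_0-z_2)^{-2},1\bigr)^T$ in the kernel, and a coincidence $F(p)=F(q)$ with $p\ne q$ would (after dividing by $p-q$) put $\bigl((p-z_1)^{-1}(q-z_1)^{-1},(p-z_2)^{-1}(q-z_2)^{-1},1\bigr)^T$ in the kernel --- both impossible because of the nonzero third entry. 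Your route uses the dual observation on the \emph{left}: pick $(\alpha,\beta)\ne 0$ with $\alpha a_i+\beta b_i=0$ for $i=1,2$ (possible exactly because the residue vectors are parallel), note that $\alpha a_3+\beta b_3\ne 0$ by the rank hypothesis, and conclude that the new second coordinate $\alpha F_1+\beta F_2$ is a nonconstant affine function of $z$; this single stroke gives both $\d F\ne 0$ and injectivity. The two arguments are equivalent linear algebra, but yours has the advantage of being entirely explicit, while the paper's kernel formulation handles immersion and embeddedness in one uniform sentence without changing coordinates.

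Two small points to tighten. First, your early assertion that ``the rank-$2$ hypothesis is exactly what guarantees $\d F\ne 0$'' is not accurate as stated --- rank $2$ alone does not suffice; you also need $a_1b_2=a_2b_1$, and indeed your own final linear-elimination argument uses both. So postpone the immersion claim until that argument. Second, you are right to flag that $(a_1,b_1)\ne 0$ and $(a_2,b_2)\ne 0$ are needed for $z_1,z_2$ to be genuine ends; observe further that $(a_3,b_3)\ne 0$ is forced by the hypotheses (if $a_3=b_3=0$ then rank $2$ would require $a_1b_2-a_2b_1\ne 0$). The nonvanishing of $(a_1,b_1)$ and $(a_2,b_2)$ is implicit in the statement (otherwise there are fewer than three ends), and the paper treats it the same way.
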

\begin{proof}
Since it is a holomorphic curve, it becomes a branched minimal immersion in $\R^4$. The Weierstrass data is given by
\begin{align*}
(\phi_1, \phi_2, \phi_3, \phi_4)=(\phi_a, -i\phi_a, \phi_b, -i\phi_b),
\end{align*}
where
\begin{align*}
\phi_a:=\left(\frac{a_1}{(z-z_1)^2}+\frac{a_2}{(z-z_2)^2}+a_3\right)d z,\ \phi_b:=\left(\frac{b_1}{(z-z_1)^2}+\frac{b_2}{(z-z_2)^2}+b_3\right)d z.
\end{align*}
Hence it has embedded planar ends at $z=z_1$, $z_2$, and $\infty$. The Gauss images of the limit ends are $[a_k, -ia_k, b_k, -ib_k]\in\mathbb{P}^3$ ($k=1, 2, 3$). Therefore $a_1b_2-b_1a_2=0$ shows that two of the three ends are parallel.

On the other hand, it is clear that the kernel has the following form:
\begin{align*}
\ker \begin{pmatrix}
a_1 & a_2 & a_3\\
b_1 & b_2 & b_3
\end{pmatrix}=\C
\begin{pmatrix}
*\\
*\\
0
\end{pmatrix}.
\end{align*}
This proves that $\phi_a$ and $\phi_b$ have no common zeros, so $F$ is an immersion. It also implies that $F$ is embedded, and the proposition is proved.
\end{proof}
\begin{rmk}
\textup{Applying \cite[Proposition~6.5]{HO1}, one can show that the converse is also true; up to rigid motions, a genus 0 complete minimal embedding of finite total curvature with 3 embedded planar ends is given by $F$ for some $a_k$'s and $b_k$'s satisfying the same conditions. But we will not proceed with any details here.}
\end{rmk}

Next, we consider a positive genus case. Let $\Sigma_g=\{(x, y)\in\overline{\C}\times\overline{\C}\ |\ y^2=\prod_{j=1}^{2g+2}(x-\lambda_j)\}$ be a Riemann surface of genus $g$, where $\lambda_j$'s are all distinct. Note that it is elliptic if $g=1$ and hyperelliptic if $g\geq2$. For each nonempty subset $I\subseteq\{1, 2, \cdots, 2g+2\}$, we define the meromorphic function
\begin{align*}
f_I\coloneqq\frac{y}{\prod_{i\in I}(x-\lambda_i)}
\end{align*}
on $\Sigma_g$. As a divisor, we may write 
\begin{align}\label{fidiv}
(f_I)=\sum_{k\in I^c}(\lambda_k, 0)+(|I|-g-1)\cdot\infty_1+(|I|-g-1)\cdot\infty_2-\sum_{i\in I}(\lambda_i, 0).
\end{align}
\begin{lem}\label{Exa}
Let $|I|\geq g+1$. Then the meromorphic 1-form $d f_I$ satisfies the following:
\begin{itemize}
\item[(1)] $d f_I$ has double poles at $(\lambda_i, 0)$ $\forall i\in I$.
\item[(2)] $d f_I$ has no zeros at $(\lambda_j, 0)$, $1\leq j\leq 2g+2$.
\item[(3)] If $|I|\geq g+3$, then $df_I$ has a zero of order $(|I|-g-2)$ at each $\infty_1$, $\infty_2$. Otherwise, it has no zeros at $\infty_1$, $\infty_2$.
\item[(4)] $\emph{res}_{(\lambda_i, 0)}df_I=0$, $\forall i\in I$.
\end{itemize}
\end{lem}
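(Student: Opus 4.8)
The plan is to compute the divisor of $df_I$ directly from the divisor of $f_I$ in \eqref{fidiv}, using the general principle that near a point $p$ where a meromorphic function $f$ has $\mathrm{ord}_p f = m \neq 0$, the differential $df$ satisfies $\mathrm{ord}_p df = m - 1$; while near a point where $\mathrm{ord}_p f = 0$ (so $f$ takes a finite nonzero value, or more precisely a finite value), $df$ may behave differently and must be examined via a local coordinate. So first I would fix convenient local coordinates at the relevant points: at a branch point $(\lambda_i, 0)$ the function $t = \sqrt{x - \lambda_i}$ (equivalently $y$ up to a unit) is a local coordinate, and $x - \lambda_i = t^2 \cdot(\text{unit})$; at $\infty_1$ and $\infty_2$ the function $s = 1/\sqrt{x}$ (appropriately branched) is a local coordinate with $x = s^{-2}\cdot(\text{unit})$ and $y = s^{-(g+1)}\cdot(\text{unit})$.

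Next I would treat the three types of points in turn. For a pole $(\lambda_i, 0)$ with $i \in I$: from \eqref{fidiv} we have $\mathrm{ord}_{(\lambda_i,0)} f_I = -1$, so in the local coordinate $t$ we get $f_I = c\, t^{-1}(1 + O(t^2))$ with $c \neq 0$ — here the key point is that $f_I$ is an \emph{odd} function of $t$ near this branch point (it is a constant times $y/(x-\lambda_i)$ times a unit even in $x$, and $y$ is odd in $t$ while $x - \lambda_i$ is even), so its Laurent expansion in $t$ has only odd powers: $f_I = c_{-1}t^{-1} + c_1 t + c_3 t^3 + \cdots$. Differentiating gives $df_I = (-c_{-1}t^{-2} + c_1 + \cdots)\,dt$, which has a pole of order exactly $2$ and, crucially, \emph{no residue} (the $t^{-1}$ coefficient of $df_I/dt$ vanishes because the expansion of $f_I$ has no $t^0$ term). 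This simultaneously establishes (1) and (4). For a branch point $(\lambda_j, 0)$ with $j \in I^c$: here $\mathrm{ord}_{(\lambda_j,0)} f_I = +1$, and the same oddness argument gives $f_I = a_1 t + a_3 t^3 + \cdots$ with $a_1 \neq 0$, so $df_I = (a_1 + 3a_3 t^2 + \cdots)\,dt$ has order $0$, i.e.\ neither a zero nor a pole; this gives (2). For $\infty_1$ and $\infty_2$: from \eqref{fidiv}, $\mathrm{ord}_{\infty_k} f_I = |I| - g - 1 =: m \geq 0$. If $m \geq 1$ then $df_I$ has order $m - 1 = |I| - g - 2$ there; if $m = 0$ one must check $f_I$ does not have a vanishing first derivative, which again follows from writing $f_I$ as $s^0$ times a unit in the local coordinate $s$ (there is no parity obstruction here since $\infty_k$ is not a branch point in the relevant sense — $s$ itself is the coordinate), giving order exactly $0$. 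This yields (3). Since a nonempty subset $I$ with $|I| \geq g+1$ has its complement possibly empty or nonempty, I should note that when $I^c = \emptyset$ (i.e.\ $|I| = 2g+2$) claim (2) is vacuous.

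I expect the main subtlety — though not really an obstacle — to be the verification at the branch points that $df_I$ has no residue, claim (4). The naive expectation ``$\mathrm{ord} = -1$ for $f_I$ implies $\mathrm{ord} = -2$ for $df_I$'' is correct for the pole order but does not by itself rule out a residue; one genuinely needs the parity observation that $f_I$, expanded in the local coordinate $t = \sqrt{x-\lambda_i}$, contains only odd powers of $t$, so that $df_I$ contains only even powers of $t\,dt$ and in particular no $t^{-1}\,dt$ term. Once this parity fact is in hand, all four items are immediate bookkeeping with the divisor \eqref{fidiv}. A clean alternative for (4), which I would mention, is the global argument: $df_I$ is an exact meromorphic $1$-form, hence has zero residue at every point, so (4) is automatic — this avoids local computation entirely. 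I would present the local/parity argument as the primary one since it is needed anyway for the pole-order and zero-order statements, and remark that exactness gives (4) for free.
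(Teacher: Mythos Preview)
Your argument is correct for (1), (2), and (4), and for (4) it takes a genuinely different---and more elegant---route than the paper. The paper proceeds by explicit calculation: starting from the product-rule expression for $df_I$ and using the identity $2\,dy/y = \sum_j \frac{dx}{x-\lambda_j}$, it rewrites $df_I$ as a $\C$-linear combination of the holomorphic forms $x^k\,dx/y$ ($0\le k\le g-1$) and the meromorphic forms $\frac{1}{x-\lambda_i}\frac{dx}{y}$ from Lemma~\ref{ETAH}, then reads off the vanishing residue from that decomposition. Your parity observation at the branch points (and even more directly, your remark that $df_I$ is exact) bypasses all of this. What the paper's longer computation buys, however, is the explicit formula \eqref{df} together with the coefficients $\sigma_{I;i}$: the intermediate identity \eqref{diffff} is exactly what drives the proofs of Lemmas~\ref{EM} and~\ref{IMM}, and the $\sigma_{I;s}$ reappear at the end of Section~\ref{MAINN} as the Gauss images of the limit ends. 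So your approach proves the lemma efficiently, but the paper's version is doing double duty for the sequel.

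Two small corrections. First, your local coordinate at $\infty_k$ is wrong: since $\deg R = 2g+2$ is even, $\infty_1$ and $\infty_2$ are \emph{not} branch points of the map $x$, so $1/x$ (not $1/\sqrt{x}$) is the local coordinate there. Second, in the borderline case $|I|=g+1$ (i.e.\ $m=0$), your claim that $df_I$ is nonvanishing at $\infty_k$ does \emph{not} follow from $f_I$ being a unit---a nonvanishing holomorphic function can certainly have vanishing derivative. In fact, expanding in $s=1/x$ one finds $f_I = 1 + \tfrac{1}{2}\bigl(\sum_{i\in I}\lambda_i - \sum_{j\in I^c}\lambda_j\bigr)s + O(s^2)$, so $df_I$ \emph{can} vanish at $\infty_k$ for special configurations of the $\lambda_j$. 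The paper glosses over this as well (declaring (3) ``clear from \eqref{fidiv}''), and the subsequent applications are not affected, but your justification of this sub-case is not valid as written.
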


\begin{proof}
(1), (2), and (3) are clear from (\ref{fidiv}). To prove (4), we compute
\begin{align}\label{diff}
d f_I&=\frac{1}{\prod_{i\in I}(x-\lambda_i)}d y-\frac{y}{\left(\prod_{i\in I}(x-\lambda_i)\right)^2}d \left(\prod_{i\in I}(x-\lambda_i)\right)\nonumber\\
&=\frac{1}{\prod_{i\in I}(x-\lambda_i)}d y-\frac{y}{\prod_{i\in I}(x-\lambda_i)}\left(\sum_{i\in I}\frac{1}{x-\lambda_i}\right)d x.
\end{align}
Taking the derivative in $y^2=\prod_{j=1}^{2g+2}(x-\lambda_j)$, this gives
\begin{align*}
2yd y=\left(\prod_{j=1}^{2g+2}(x-\lambda_j)\right)\left(\sum_{j=1}^{2g+2}\frac{1}{x-\lambda_j}\right)d x=y^2\left(\sum_{j=1}^{2g+2}\frac{1}{x-\lambda_j}\right)d x
\end{align*}
and therefore
\begin{align}\label{difff}
\frac{2d y}{y^2}=\left(\sum_{j=1}^{2g+2}\frac{1}{x-\lambda_j}\right)\frac{d x}{y}.
\end{align}
Hence, from (\ref{diff}) and (\ref{difff}), we obtain
\begin{align}\label{diffff}
d f_I&=\frac{1}{2}\frac{y^2}{\prod_{i\in I}(x-\lambda_i)}\left(\sum_{j=1}^{2g+2}\frac{1}{x-\lambda_j}-2\sum_{i\in I}\frac{1}{x-\lambda_i}\right)\frac{d x}{y}\nonumber\\
&=\frac{1}{2}\frac{y^2}{\prod_{i\in I}(x-\lambda_i)}\left(\sum_{j\in I^c}\frac{1}{x-\lambda_j}-\sum_{i\in I}\frac{1}{x-\lambda_i}\right)\frac{d x}{y}\nonumber\\
&=\frac{1}{2}\left(\prod_{j\in I^c}(x-\lambda_j)\right)\left(\sum_{j\in I^c}\frac{1}{x-\lambda_j}-\sum_{i\in I}\frac{1}{x-\lambda_i}\right)\frac{d x}{y},
\end{align}
where $y^2=\prod_{j=1}^{2g+2}(x-\lambda_j)$ was used in the last equality.

For each $s\in\{1, 2, \cdots, 2g+2\}$, define
\begin{align}\label{coef}
\sigma_{I; s}\coloneqq\prod_{j\in I^c}(\lambda_s-\lambda_j)
\end{align}
and
\begin{align*}
h_{I; s}(x)\coloneqq\frac{1}{x-\lambda_s}\left(\prod_{j\in I^c}(x-\lambda_j)-\sigma_{I; s}\right).
\end{align*}
Moreover, let us denote
\begin{align*}
H_I(x)\coloneqq\sum_{s\in I^c}h_{I; s}(x)=\left(\prod_{j\in I^c}(x-\lambda_j)\right)\left(\sum_{j\in I^c}\frac{1}{x-\lambda_j}\right).
\end{align*}
Then we may write (\ref{diffff}) as
\begin{align}\label{df}
d f_I=\frac{1}{2}\left(H_{I}(x)-\sum_{i\in I}h_{I; i}(x)-\sum_{i\in I}\frac{\sigma_{I; i}}{x-\lambda_i}\right)\frac{d x}{y}.
\end{align}

Consider $H_I(x)$ and $h_{I; i}(x)$ $(i\in I)$ as polynomials of $x$. If $|I|\geq g+1$, then 
\begin{align*}
\deg\left(H_{I}(x)-\sum_{i\in I}h_{I; i}(x)\right)\leq g-1,
\end{align*}
and we see that
\begin{align*}
\frac{1}{2}\left(H_{I}(x)-\sum_{i\in I}h_{I; i}(x)\right)\frac{d x}{y}
\end{align*}
defines a holomorphic 1-form on $\Sigma_g$. Now (4) is proved by Lemma \ref{ETAH}.
\end{proof}

Let $I, J\subseteq \{1, 2, \cdots, 2g+2\}$ be two subsets satisfying $I\neq J$ and $|I|, |J|\geq g+1$. To construct examples with embedded planar ends, we consider the map $\Phi_{I, J}$ given by
\begin{align}\label{exmap}
\Phi_{I, J}=(f_I, f_J)
\end{align}
from $\Sigma_g\backslash\{(\lambda_k, 0)\in\Sigma_g|k\in I\cup J\}$ to $\C^2$. It becomes a branched minimal immersion in $\R^4$ with the Weierstrass data given as
\begin{align*}
(\phi_1, \phi_2, \phi_3, \phi_4)=(d f_I, -id f_I, d f_J, -id f_J),
\end{align*}
where branch points can occur at common zeros of $d f_I$ and $d f_J$. Now it is an easy consequence of Lemma \ref{Exa} that the data satisfies (\ref{EPWD}) in Proposition \ref{EPW}. Therefore we obtain a branched minimal immersion in $\R^4$ with $|I\cup J|$ embedded planar ends. 

In the following, we will show that branch points cannot exist for some special choices of $I$ and $J$. We from now on assume that $|I|\geq |J|$. We also assume that $|J|\leq g+2$ to rule out the possible branch points at $\infty_1$ and $\infty_2$ (see (3) in Lemma \ref{Exa}).
\begin{lem}\label{EM}
Suppose that $I$, $J$ satisfy one of the following conditions:
\begin{itemize}
\item[(1)] $|I|=|J|$ and $|I\cap J|=g$ or $g+1$.
\item[(2)] $J\subset I$ and $|I|=g+2$, $|J|=g+1$.
\end{itemize}
Then $d f_I$ and $d f_J$ have no common zeros.
\end{lem}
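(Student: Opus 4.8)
The plan is to analyze the zero divisors of $df_I$ and $df_J$ explicitly using the formula \eqref{df} and count, in each of the two listed cases, the number of zeros each form has on $\Sigma_g$; then to show that no point of $\Sigma_g$ can be a zero of both. First I would record the degrees: each $df_I$ is a meromorphic 1-form on a genus $g$ surface, so $\deg(df_I) = 2g-2$; by Lemma \ref{Exa}(1)--(3), all poles of $df_I$ are the $2|I|$ poles (counted with multiplicity) at the branch points $(\lambda_i,0)$, $i\in I$, together with no zeros or poles at $\infty_1,\infty_2$ under the standing assumption $|J|\le g+2$ (and, for $I$, either $|I|\le g+2$ or the prescribed zero of order $|I|-g-2$ at each of $\infty_1,\infty_2$). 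Matching degrees then pins down the total order of the finite zeros of $df_I$ away from the branch points. The crucial structural input is the expression in \eqref{df}: writing $df_I = \tfrac12 G_I(x)\,\tfrac{dx}{y}$ where $G_I(x) = H_I(x) - \sum_{i\in I}h_{I;i}(x) - \sum_{i\in I}\tfrac{\sigma_{I;i}}{x-\lambda_i}$, one sees that, after clearing the simple poles, the zeros of $df_I$ other than possibly at $\infty_{1,2}$ correspond to the zeros of the polynomial $P_I(x) := \big(H_I(x) - \sum_{i\in I}h_{I;i}(x)\big)\prod_{i\in I}(x-\lambda_i) - \sum_{i\in I}\sigma_{I;i}\prod_{i'\in I, i'\ne i}(x-\lambda_{i'})$ lying in the $x$-plane, each such root $x_0\notin\{\lambda_j\}$ giving two zeros $(x_0,\pm y_0)$ of $df_I$ (or one double zero if $y_0=0$, i.e. if $x_0$ is some $\lambda_j$ with $j\notin I$ — but one checks from Lemma \ref{Exa}(2) this does not happen).

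Next I would compare $df_I$ and $df_J$ at a hypothetical common zero $q=(x_0,y_0)$. Since both forms are meromorphic multiples of $\tfrac{dx}{y}$, having a common zero at $q$ with $x_0$ finite and $x_0\notin\{\lambda_j : j\in I\cup J\}$ means $G_I(x_0) = G_J(x_0) = 0$, i.e. $x_0$ is a common root of the two rational functions $G_I, G_J$; equivalently a common root of $P_I$ and $P_J$ (after accounting for the denominators, which are nonzero at $x_0$). In case (1), where $|I|=|J|$ and $I,J$ differ in one or two elements, I would subtract: $df_I - df_J$ or an appropriate combination is controlled by the symmetric difference $I\triangle J$, and the key is that $G_I - G_J$, again expressed as a rational function, has a denominator supported on $(I\triangle J)$ and a numerator of small, explicitly bounded degree; forcing $G_I(x_0)=G_J(x_0)=0$ together with $G_I(x_0)-G_J(x_0)=0$ then over-determines $x_0$. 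In case (2), where $J\subset I$ with $|I|=g+2$, $|J|=g+1$, there is a single extra index $\{i_0\} = I\setminus J$, and I would exploit the resulting near-coincidence of the two polynomials: $\prod_{i\in I}(x-\lambda_i) = (x-\lambda_{i_0})\prod_{i\in J}(x-\lambda_i)$, so $P_I$ and $P_J$ are related by an explicit factor-of-$(x-\lambda_{i_0})$ manipulation, and a common root would force $(x_0-\lambda_{i_0})$ to divide a specific low-degree remainder polynomial, which a direct evaluation shows is impossible for $x_0\notin\{\lambda_j\}$.

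I also have to treat the points $\infty_1,\infty_2$ separately: under the hypotheses $|J|\le g+2$, $df_J$ has no zero there by Lemma \ref{Exa}(3), so $\infty_1,\infty_2$ are automatically not common zeros regardless of $df_I$; this disposes of those points with no further work. Likewise the branch points $(\lambda_j,0)$ with $j\in I\cup J$ are poles of at least one of the two forms, hence not common zeros; and for $j\notin I\cup J$ both forms are holomorphic and nonvanishing at $(\lambda_j,0)$ by Lemma \ref{Exa}(2), so those are excluded too. Thus the whole argument reduces to the polynomial statement about common roots of $P_I$ and $P_J$ in the finite $x$-plane away from the $\lambda_j$.

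The main obstacle I anticipate is the bookkeeping in the polynomial computation — keeping track of exact degrees of $H_I - \sum h_{I;i}$, of the correction terms $\sum_i \sigma_{I;i}\prod_{i'\ne i}(x-\lambda_{i'})$, and of how these interact when one passes from $I$ to $J$ — and then showing the relevant difference polynomial is nonzero with no unwanted roots. This is where one genuinely uses that $|I\cap J|$ is $g$ or $g+1$ (not smaller) in case (1) and that $J\subset I$ with the precise cardinalities in case (2): these are exactly the constraints that keep the difference polynomial's degree low enough that a common zero of $P_I$, $P_J$, and their difference becomes impossible. I expect the cleanest route is to fix a common root $x_0$, clear all denominators, and derive a single polynomial identity in the $\lambda_j$'s that is violated because the $\lambda_j$ are pairwise distinct; checking that this identity is genuinely nontrivial (i.e. that the leading coefficient does not vanish) is the delicate point.
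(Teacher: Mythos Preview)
Your plan is correct in spirit and centers on the same idea as the paper --- subtract and show the difference has no relevant zeros --- but you are routing it through the expression (\ref{df}) and the auxiliary polynomials $P_I$, $P_J$, which makes the computation opaque. The paper instead works directly from the equivalent formula (\ref{diffff}): writing $S_K(x)=\sum_{k\in K^c}\tfrac{1}{x-\lambda_k}-\sum_{k\in K}\tfrac{1}{x-\lambda_k}$, one has $df_K=\tfrac12\bigl(\prod_{k\in K^c}(x-\lambda_k)\bigr)\,S_K(x)\,\tfrac{dx}{y}$, so away from branch points and $\infty_{1,2}$ the zeros of $df_K$ are exactly those of $S_K$. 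Then
\[
S_I-S_J \;=\; 2\Bigl(\sum_{k\in J\setminus I}\frac{1}{x-\lambda_k}-\sum_{k\in I\setminus J}\frac{1}{x-\lambda_k}\Bigr),
\]
and the paper observes that hypotheses (1) and (2) force $|I\setminus J|\le 1$ and $|J\setminus I|\le 1$. With at most one term on each side, $S_I-S_J$ is either $\pm\tfrac{2}{x-\lambda_a}$ or $\tfrac{2(\lambda_a-\lambda_b)}{(x-\lambda_a)(x-\lambda_b)}$ with $\lambda_a\ne\lambda_b$, hence has \emph{no finite zeros at all}. A common finite zero of $S_I$ and $S_J$ is therefore impossible, and branch points together with $\infty_{1,2}$ are handled exactly as you say via Lemma~\ref{Exa}. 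That is the entire proof: no polynomial bookkeeping, no separate treatment of case~(2), and nothing ``delicate'' survives.

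The soft spot in your proposal as written is the step ``numerator of small, explicitly bounded degree $\Rightarrow$ over-determines $x_0$'': a small-degree numerator can still have roots, and you would then have to exclude them individually. The sharper fact --- that the numerator is a nonzero \emph{constant} --- is immediate from the form $S_K$ but obscured by the $G_I$, $H_I$, $h_{I;i}$, $\sigma_{I;i}$ packaging you chose. Likewise, your proposed multiplicative argument for case~(2) via the factor $(x-\lambda_{i_0})$ is unnecessary: the same one-line subtraction handles both cases uniformly.
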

\begin{proof}
Recall from (\ref{diffff}) that 
\begin{align*}
d f_I=\frac{1}{2}\left(\prod_{k\in I^c}(x-\lambda_k)\right)\left(\sum_{k\in I^c}\frac{1}{x-\lambda_k}-\sum_{i\in I}\frac{1}{x-\lambda_i}\right)\frac{d x}{y}
\end{align*}
and
\begin{align*}
d f_J=\frac{1}{2}\left(\prod_{k\in J^c}(x-\lambda_k)\right)\left(\sum_{k\in J^c}\frac{1}{x-\lambda_k}-\sum_{j\in J}\frac{1}{x-\lambda_j}\right)\frac{d x}{y}.
\end{align*}
If $I$, $J$ satisfy (1) or (2), then common zeros of 
\begin{align*}
\sum_{k\in I^c}\frac{1}{x-\lambda_k}-\sum_{i\in I}\frac{1}{x-\lambda_i},\ \sum_{k\in J^c}\frac{1}{x-\lambda_k}-\sum_{j\in J}\frac{1}{x-\lambda_j}
\end{align*}
must belong to $\{\infty_1, \infty_2\}$. Indeed,
\begin{align*}
&\left(\sum_{k\in I^c}\frac{1}{x-\lambda_k}-\sum_{i\in I}\frac{1}{x-\lambda_i}\right)-\left(\sum_{k\in J^c}\frac{1}{x-\lambda_k}-\sum_{j\in J}\frac{1}{x-\lambda_j}\right)\\
&=2\left(\sum_{k\in J\backslash I}\frac{1}{x-\lambda_k}-\sum_{k\in I\backslash J}\frac{1}{x-\lambda_k}\right)
\end{align*}
and since both (1) and (2) imply that $|I\backslash J|\leq 1$ and $|J\backslash I|\leq 1$, so only possible zeros are $\infty_1$ and $\infty_2$. Hence, by Lemma \ref{Exa}, we can finish the proof.
\end{proof}
It should be noted that there is no assumption on $\lambda_k$'s in the previous lemma. However, choosing $\lambda_k$'s carefully can handle more cases:
\begin{lem}\label{IMM}
If $|I\cup J|\leq 2g+1$, then $\lambda_k$'s can be chosen so that $d f_I$ and $d f_J$ have no common zeros.
\end{lem}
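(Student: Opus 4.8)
The plan is to show that the locus of ``bad'' parameters $(\lambda_1,\dots,\lambda_{2g+2})$ for which $df_I$ and $df_J$ share a common zero is a proper subvariety of the parameter space, so that a generic choice works; the hypothesis $|I\cup J|\le 2g+1$ guarantees there is at least one index not used by either $I$ or $J$, and that free index will provide the room needed to break any incidental coincidence. Concretely, recall from (\ref{diffff}) that away from $\infty_1,\infty_2$ a common zero of $df_I$ and $df_J$ is a point $(x_0,y_0)$ with
\begin{align*}
g_I(x_0)\coloneqq\sum_{k\in I^c}\frac{1}{x_0-\lambda_k}-\sum_{i\in I}\frac{1}{x_0-\lambda_i}=0,\quad g_J(x_0)\coloneqq\sum_{k\in J^c}\frac{1}{x_0-\lambda_k}-\sum_{j\in J}\frac{1}{x_0-\lambda_j}=0,
\end{align*}
(together with $x_0\ne\lambda_k$ for all $k$, since by Lemma \ref{Exa}(2) $df_I$ has no zero at a branch point), while by the assumption $|J|\le g+2$ and Lemma \ref{Exa}(3) there is no common zero at $\infty_1,\infty_2$ either. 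So it suffices to choose the $\lambda_k$'s so that the two polynomials obtained by clearing denominators in $g_I$ and $g_J$ have no common root.

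First I would fix an index $m\in\{1,\dots,2g+2\}\setminus(I\cup J)$, which exists precisely because $|I\cup J|\le 2g+1$; note $m\in I^c\cap J^c$. Freeze all $\lambda_k$ with $k\ne m$ at some generic distinct values and regard $\lambda_m$ as the single free variable $t$. Both $g_I$ and $g_J$ contain the same term $\frac{1}{x-t}$ (with coefficient $+1$, since $m$ lies in both complements), so the difference $g_I-g_J$ is independent of $t$; more importantly, multiplying through by $\prod_{k}(x-\lambda_k)$ turns $g_I=0$ into a polynomial equation $P_I(x,t)=0$ in which $t$ genuinely appears. The key point is to exhibit, for generic frozen data, a value of $t$ making $\gcd(P_I(\cdot,t),P_J(\cdot,t))=1$: this is an open condition in $t$ (the resultant $\mathrm{Res}_x(P_I,P_J)$ is a polynomial in $t$), so it suffices to show this resultant is not identically zero as a function of $t$, which one checks by a degree or leading-coefficient argument — letting $t\to\infty$, the term $\frac{1}{x-t}$ drops out and $P_I, P_J$ degenerate to polynomials of strictly smaller degree whose non-common-root-ness can be arranged with the remaining free parameters, or alternatively one specializes $t$ to coincide (in the limit) with a configuration already handled by Lemma \ref{EM}.

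The main obstacle I anticipate is the bookkeeping needed to guarantee the resultant $\mathrm{Res}_x(P_I,P_J)$ is not the zero polynomial in the frozen $\lambda_k$'s — equivalently, that there exists \emph{some} admissible configuration with no common zero, which then propagates to a Zariski-open set of configurations. The cleanest way around this is induction on $|I\cup J|$: if $I$ and $J$ already fall under Lemma \ref{EM} we are done with no constraint on the $\lambda_k$'s; otherwise one enlarges or shrinks $I$ and $J$ by the single free index $m$ to reach a pair covered by Lemma \ref{EM} or by a smaller instance of the present lemma, and uses a perturbation/continuity argument (a non-vanishing resultant is stable under small deformation of parameters) to transfer the conclusion back. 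I would carry this out by first reducing to the case $|I|=|J|$ via the free index, then matching $|I\cap J|$ down to $g$ or $g+1$, at which point Lemma \ref{EM}(1) applies; throughout, the fact that $m\notin I\cup J$ is exactly what makes each reduction step legal while keeping all the $\lambda_k$'s distinct.
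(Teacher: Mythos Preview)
Your overall strategy --- show that the bad locus is a proper subvariety by exhibiting \emph{one} configuration with no common zero --- is sound, and you correctly isolate the two structural facts that matter: (a) the hypothesis $|I\cup J|\le 2g+1$ furnishes a free index $m\in(I\cup J)^c$, and (b) the difference $g_I-g_J$ does not see $\lambda_m$. But your verification that the resultant is not identically zero is where the argument stalls. Freezing all $\lambda_k$ with $k\ne m$ and sending $t\to\infty$ does not obviously land you in a configuration without common roots, because the other parameters are already frozen and you cannot ``arrange'' anything with them at that stage. The proposed induction is more seriously confused: enlarging or shrinking $I$ and $J$ by $m$ changes the functions $f_I,f_J$ themselves, so you would be proving a different statement, and there is no evident continuity or degeneration argument that transfers conclusions about $df_{I\cup\{m\}}$ back to $df_I$.

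The paper bypasses all of this with a direct decomposition that you are one step away from. You already observed that $g_I-g_J=2B$ with $B\coloneqq\sum_{k\in J\setminus I}\frac{1}{x-\lambda_k}-\sum_{k\in I\setminus J}\frac{1}{x-\lambda_k}$ independent of the free parameters. The companion observation is that $g_I+g_J=2A$ with $A\coloneqq\sum_{k\in(I\cup J)^c}\frac{1}{x-\lambda_k}-\sum_{k\in I\cap J}\frac{1}{x-\lambda_k}$, so that $g_I=A+B$ and $g_J=A-B$, and hence a common zero of $g_I,g_J$ is exactly a common zero of $A$ and $B$. Now fix the $\lambda_k$ for $k\in I\cup J$ first: this pins down $B$ and its finitely many zeros. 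Then choose the remaining $\lambda_k$ for $k\in(I\cup J)^c$ (there is at least one, by hypothesis) so that $A$ avoids those finitely many points --- an elementary avoidance, no resultants or induction needed. This is the missing step that replaces your resultant/induction machinery with a two-line argument.
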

\begin{proof}
Suppose that $\lambda_k$ $\forall k\in I\cup J$ are given. We may express
\begin{align*}
&\sum_{k\in I^c}\frac{1}{x-\lambda_k}-\sum_{i\in I}\frac{1}{x-\lambda_i}\\
&=\left(\sum_{k\in (I\cup J)^c}\frac{1}{x-\lambda_k}-\sum_{k\in I\cap J}\frac{1}{x-\lambda_k}\right)+\left(\sum_{k\in J\backslash I}\frac{1}{x-\lambda_k}-\sum_{k\in I\backslash J}\frac{1}{x-\lambda_k}\right)
\end{align*}
and choose $\lambda_k$ $\forall k\in (I\cup J)^c$ so that
\begin{align*}
\sum_{k\in (I\cup J)^c}\frac{1}{x-\lambda_k}-\sum_{k\in I\cap J}\frac{1}{x-\lambda_k}
\end{align*}
does not vanish on the zeros of 
\begin{align*}
\sum_{k\in J\backslash I}\frac{1}{x-\lambda_k}-\sum_{k\in I\backslash J}\frac{1}{x-\lambda_k}.
\end{align*}
It is possible since there are only finitely many zeros. 

For such $\lambda_k$'s, one can show that
\begin{align*}
\sum_{k\in I^c}\frac{1}{x-\lambda_k}-\sum_{i\in I}\frac{1}{x-\lambda_i},\ \sum_{k\in J^c}\frac{1}{x-\lambda_k}-\sum_{j\in J}\frac{1}{x-\lambda_j}
\end{align*}
cannot have common zeros. Then the lemma can be proved by Lemma \ref{Exa}.
\end{proof}
Combining Lemma \ref{EM} and \ref{IMM}, we have proved that genus $g$ minimal immersions with $|I\cup J|$ embedded planar ends can be given in the form of (\ref{exmap}). All immersions are symmetric with respect to the origin because of the hyperelliptic involution. Moreover, if $g=1$, then one can show easily that all 3-ended minimal immersions coming from Lemma \ref{EM} are embedded. Therefore we have the following theorem:

\begin{thm}
Let $g\geq 1$ and $g+2\leq d\leq 2g+1$. For every $(g, d)$, there exists a complete minimal immersion of finite total curvature in $\R^4$ with $d$ embedded planar ends and genus $g$, that is symmetric with respect to the origin and hyperelliptic if $g\geq2$. Moreover, there are embedded examples when $g=1$ and $d=3$.
\end{thm}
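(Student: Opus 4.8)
The plan is to assemble the theorem directly from the three lemmas on the Weierstrass data of the maps $\Phi_{I,J}=(f_I,f_J)$, together with the completeness and finite-total-curvature bookkeeping. First I would fix $g\geq1$ and $g+2\leq d\leq 2g+1$, and produce a pair of subsets $I,J\subseteq\{1,\dots,2g+2\}$ with $|I\cup J|=d$, $|I|\geq|J|$, and $|J|\leq g+2$, chosen so that either the hypotheses of Lemma \ref{EM} are met or (failing that) Lemma \ref{IMM} applies. Concretely: for $d=g+2$ take $|I|=g+2$, $|J|=g+1$ with $J\subset I$ (case (2) of Lemma \ref{EM}); for $g+3\leq d\leq 2g+1$ one cannot keep $J\subset I$, so I would instead invoke Lemma \ref{IMM}, which gives an admissible choice of branch values $\lambda_k$ for any $d\leq 2g+1$. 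In every case Lemma \ref{Exa}(1)--(2) shows $df_I$, $df_J$ each have exactly double poles with zero residues at the punctures $(\lambda_k,0)$, $k\in I\cup J$, and no zeros there, while (3) plus the constraint $|I|,|J|\leq g+2$ rules out any pole or zero at $\infty_1,\infty_2$; hence the Weierstrass data $(\phi_1,\phi_2,\phi_3,\phi_4)=(df_I,-idf_I,df_J,-idf_J)$ satisfies (\ref{EPWD}) of Proposition \ref{EPW} at all $d$ punctures.

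Next I would check this data defines a genuine (unbranched) conformal minimal immersion. The metric degenerates exactly where $df_I$ and $df_J$ vanish simultaneously; by Lemma \ref{EM} (for $d=g+2$, or for $d=g+3$ with $|I|=|J|$, $|I\cap J|=g$) or by Lemma \ref{IMM} (for the remaining $(g,d)$ with $d\leq 2g+1$), such common zeros do not occur on $\Sigma_g$, and the analysis of the points $\infty_1,\infty_2$ above shows they are not common zeros there either. Moreover $\sum\phi_j^2 = 2(df_I)^2+2(df_J)^2$ need not vanish identically — indeed it vanishes iff $df_J=\pm i\,df_I$, which is impossible since $f_I\neq f_J$ are distinct nonconstant meromorphic functions with different divisors — so $(\phi_j)$ is an honest Weierstrass data and $\Phi_{I,J}$ is a conformal minimal immersion. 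The periods are automatically zero: each $\phi_j$ is an exact meromorphic differential $d(f_I)$ or $d(f_J)$ (up to the constant $\pm i$), so $\mathrm{Re}\int\phi_j$ is single-valued and $\Phi_{I,J}$ is well-defined on $\Sigma_g\setminus\{(\lambda_k,0):k\in I\cup J\}$. Completeness and finiteness of total curvature are then standard: the metric is smooth away from finitely many punctures and blows up like a double pole at each, which are exactly the complete ends of finite total curvature (embedded planar ends), so the Gauss–Bonnet integral converges.

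For the symmetry claim, the hyperelliptic involution $\iota(x,y)=(x,-y)$ acts by $\iota^*f_I=-f_I$ and $\iota^*f_J=-f_J$ (since $y\mapsto -y$), hence $\iota^*\phi_j=-\phi_j$ for all $j$; by the argument already carried out in case (ii) of the proof of Theorem \ref{G1}, this forces $\Phi_{I,J}\circ\iota = V - \Phi_{I,J}$ for a constant vector $V$, i.e.\ the image is symmetric about the point $V/2$, which after translation we take to be the origin. When $g\geq2$, $\Sigma_g$ is hyperelliptic by construction; when $g=1$ it is elliptic. Finally, for $g=1$, $d=3$ the embeddedness assertion: here $\Sigma_1$ is a torus, $I,J$ come from Lemma \ref{EM} with $|I|=|J|=2$ (so $|I\cap J|\in\{1,2\}$, but $I\neq J$ forces $|I\cap J|=1$, and $g=1$ makes this case $g=g+1-1$... in fact $|I\cap J|=g=1$), and one checks directly that $\Phi_{I,J}=(f_I,f_J)\colon \Sigma_1\setminus\{3\ \text{points}\}\to\C^2$ is injective — the two coordinate functions separate points because their polar divisors overlap in only one puncture, so a fibre of $f_I$ meeting a fibre of $f_J$ in two points would force a nontrivial linear relation contradicting $I\neq J$; alternatively one can argue as in the genus-0 Proposition, using that the relevant coefficient matrix has full rank.

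\textbf{Main obstacle.} The delicate point is the no-common-zeros analysis underlying unbranchedness, and in particular handling the points $\infty_1,\infty_2$ uniformly across the whole range $g+2\leq d\leq 2g+1$: Lemma \ref{EM} only covers the two smallest ``most symmetric'' configurations with no condition on the $\lambda_k$, while the general case genuinely needs the genericity choice of Lemma \ref{IMM}, and one must make sure the bound $|J|\leq g+2$ (needed to kill branching at infinity via Lemma \ref{Exa}(3)) is actually compatible with realizing every target value $d=|I\cup J|$ up to $2g+1$ — e.g.\ by taking $|I|=g+2$ and $|J|=g+1$ with $|I\cap J| = 2g+3-d$, which is a valid non-negative integer precisely when $d\leq 2g+3$ and is $\geq 0$, so in range. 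Verifying that the $(g=1,d=3)$ examples are genuinely embedded (not merely immersed) is the other spot requiring a short but non-automatic argument.
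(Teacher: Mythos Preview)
Your overall strategy is exactly the paper's: realize the surface as the holomorphic curve $\Phi_{I,J}=(f_I,f_J)$ on a hyperelliptic $\Sigma_g$, use Lemma~\ref{Exa} to get the embedded-planar-end condition~(\ref{EPWD}), use Lemmas~\ref{EM} and~\ref{IMM} to kill common zeros of $df_I,df_J$ (hence branch points), and read off the origin-symmetry from the hyperelliptic involution. Your bookkeeping on the admissible sizes $|I|,|J|$ and on $\infty_1,\infty_2$ is correct, and the period-free step (the $\phi_j$ are exact) is a nice explicit remark that the paper leaves implicit.

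There is, however, one genuine slip. You write $\sum_j\phi_j^2 = 2(df_I)^2+2(df_J)^2$ and then argue it ``need not vanish identically.'' Both the computation and the desired conclusion are wrong. With $(\phi_1,\phi_2,\phi_3,\phi_4)=(df_I,-i\,df_I,df_J,-i\,df_J)$ one has
\[
\phi_1^2+\phi_2^2=(df_I)^2+(-i)^2(df_I)^2=0,\qquad \phi_3^2+\phi_4^2=0,
\]
so $\sum_j\phi_j^2\equiv 0$ automatically --- and this identical vanishing is precisely the conformality condition one \emph{needs} in the generalized Weierstrass representation (see the bullet list after~(\ref{WR})). You appear to have conflated $\sum_j\phi_j^2$ with $\sum_j|\phi_j|^2$; the latter equals $2|df_I|^2+2|df_J|^2$ and its positivity (the immersion condition) is exactly what your previous sentence about common zeros already established. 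The upshot is that your error is harmless for the argument --- holomorphic curves in $\C^2$ are always isotropic --- but the paragraph as written should be deleted or replaced by the one-line observation that a holomorphic curve in $\C^2$ automatically satisfies $\sum\phi_j^2\equiv 0$.

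Your embeddedness sketch for $(g,d)=(1,3)$ is at the same level of detail as the paper's (``one can show easily''); a clean way to finish it is to note that $f_I/f_J=(x-\lambda_c)/(x-\lambda_b)$ depends only on $x$, so $\Phi_{I,J}(p)=\Phi_{I,J}(q)$ forces $x(p)=x(q)$, hence $q=\iota(p)$, hence $f_I(p)=-f_I(p)=0$ and likewise for $f_J$, and the only common zero of $f_I,f_J$ is a Weierstrass point, which is fixed by $\iota$.
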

By (\ref{coef}) and (\ref{df}) in the proof of Lemma \ref{Exa}, the Gauss images of the limit ends are given by 
\begin{align*}
[\sigma_{I; s}, -i\sigma_{I; s}, \sigma_{J; s}, -i\sigma_{J; s}]\in\mathbb{P}^3,\ s\in I\cup J.
\end{align*}
Since Lemma \ref{EM} is true for arbitrary $\lambda_k$'s, we are able to obtain minimal immersions with various kinds of end positions. So, in particular, the above theorem provides a family of embedded minimal tori in $\R^4$ with 3 embedded planar ends. We would like to emphasize that these examples are of particular interest as there are no such minimal immersions in $\R^3$.

	

\end{document}